\renewcommand{\leq}{\leqslant}
\renewcommand{\geq}{\geqslant}
\newcommand{\seq}{\subseteq}
\newcommand\restr[2]{{
  \left.\kern-\nulldelimiterspace 
  #1 
  \vphantom{\big|} 
  \right|_{#2} 
  }}
\newdimen\rh@wd
\newdimen\rh@hta
\newdimen\rh@htb
\newbox\rh@box
\def\rh@measure#1{\setbox\rh@box=\hbox{$#1$}\rh@wd=\wd\rh@box \rh@hta=\ht\rh@box}
\def\widecheck#1{\rh@measure{#1}%
  \setbox\rh@box=\hbox{$\widehat{\vrule height \rh@hta width\z@ \kern\rh@wd}$}%
  \rh@htb=\ht\rh@box \advance\rh@htb\rh@hta \advance\rh@htb\p@
  \ooalign{$\vrule height \ht\rh@box width\z@ #1$\cr
           \raise\rh@htb\hbox{\scalebox{1}[-1]{\box\rh@box}}\cr}}
\newtheorem{theorem}{Theorem}[section]
\newtheorem{lemma}[theorem]{Lemma}
\newtheorem{corollary}[theorem]{Corollary}
\newtheorem{proposition}[theorem]{Proposition}
\theoremstyle{definition}
\newtheorem{definition}[theorem]{Definition}
\newtheorem{remark}[theorem]{Remark}
\newtheorem{remarks}[theorem]{Remarks}
\newtheorem{assumption}{Assumption}
\renewcommand{\paragraph}[1]{\textit{#1}}
\newcommand{\Cx}{\ensuremath{\mathbf{C}}} 
\newcommand{\Cst}{\ensuremath{\sf C^*}} 
\newcommand{\T}{{\sf T}} 
\newcommand{\I}{\mathscr{I}} 
\renewcommand{\S}{{\sf S}} 
\newcommand{\D}{{\sf D}} 
\DeclareMathOperator{\CC}{\mathbb{C}} 
\newcommand{\C}{\mathscr{C}} 
\DeclareMathOperator{\VV}{\mathbb{V}} 
\newcommand{\V}{\mathscr{V}} 
\DeclareMathOperator{\VC}{\VV\circ\CC} 
\DeclareMathOperator{\Eq}{\rm Eq} 
\DeclareMathOperator{\dom}{\mathrm{dom}} 
\DeclareMathOperator{\id}{\mathrm{id}} 
\newcommand{\sprod}[1]{\underset{#1}\smallproduct} 
\DeclareMathOperator{\smallproduct}{\Pi} 
\DeclareMathOperator{\Sub}{Sub}
\newcommand{\Q}{{\sf Q}} 
\newcommand{\A}{{\sf X}} 
\newcommand{\B}{{\sf Y}} 
\newcommand{\Cc}{{\sf C}} 
\newcommand{\Set}{{\sf Set}} 
\newcommand{\F}{{\mathscr{F}}} 
\newcommand{\U}{{\mathscr{U}}} 
\newcommand{\f}{\varphi} 
\newcommand{\Va}{{\sf V}} 
\newcommand{\Vap}{{\sf V}_{\sf p}} 
\renewcommand{\a}{{\triangle}} 
\renewcommand{\b}{{\bigtriangledown}} 
\newcommand{\ev}{{\rm ev}} 
\title{General affine adjunctions, Nullstellens{\"a}tze, and dualities}
\author[O. Caramello]{Olivia Caramello}
\address[O. Caramello]{Dipartimento di Scienza e Alta Tecnologia, Universit\`a degli Studi dell'Insubria, Via Valleggio 11, 22100 Como, Italy.}
\email{olivia@oliviacaramello.com}
\author[V. Marra]{Vincenzo Marra}
\address[V. Marra]{Dipartimento di Matematica \textsl{Federigo Enriques}, Universit\`a degli Studi di Milano, Via Cesare Saldini 50, 20133 Milano, Italy.}
\email{vincenzo.marra@unimi.it}
\author[L. Spada]{Luca Spada}
\address[L. Spada, corresponding author]{Dipartimento di Matematica, Universit\`a degli Studi di Salerno, Via Giovanni Paolo II 132, 84084 Fisciano (SA), Italy.}
\email{lspada@unisa.it}
\subjclass[2010]{Primary 18A40; Secondary 03C05.}
\begin{document}
\begin{abstract}
We introduce and investigate a category-theoretic abstraction of the standard ``system-solution'' adjunction in affine algebraic geometry. We then look further into these geometric adjunctions at different levels of generality, from syntactic categories to (possibly infinitary) equational classes of algebras. In doing so, we discuss the relationships between the dualities induced by our framework and the well-established  theory of concrete dual adjunctions.  In the context of general algebra we prove an analogue of Hilbert's {\it Nullstellensatz}, thereby achieving a complete characterisation of the fixed points on the algebraic side of the adjunction. 
\end{abstract}

\maketitle

\section{Introduction}\label{s:intro}
We are concerned in this paper with an abstraction of the  affine adjunction of classical algebraic geometry to  algebraic and categorical settings. This introduction describes our results in increasing order of generality. For illustrative purposes we begin with the  case of $k$-algebras and proceed through successive generalisations;  the article, by contrast, develops the general case first.

 If $k$ is an algebraically closed field, any subset $R$ of the polynomial ring over finitely many variables $k[X]:=k[X_{1},\ldots,X_{n}]$ determines the (possibly infinite) system of equations:
\begin{equation}\label{eq:sist}
p(X_1,\ldots, X_{n})=0, \ p \in R.
\end{equation}
Let us write $\VV{(R)}\subseteq k^{n}$ for the set of solutions of \eqref{eq:sist} over $k^{n}$, where $k^{n}$ is the affine $n$-space over $k$. 
Then $\VV{(R)}$  
is the \emph{affine set defined   by $R$}. 

Conversely, for any subset $S\seq k^{n}$ we can consider the set $\CC{(S)}\seq k[X]$ of polynomials that vanish over $S$, which is automatically an ideal. Then $\CC{(S)}$ is the \emph{ideal defined by $S$}.   
Writing $2^{E}$ for the power set  of the set $E$, we obtain functions (implicitly indexed by $n$)
\begin{align}
\CC&\colon 2^{k^{n}}\longrightarrow 2^{k[X]},\label{intro:c}\\
\VV&\colon 2^{k[X]}\longrightarrow 2^{k^{n}}\label{intro:v}
\end{align}
that yield a (contravariant) Galois connection. The fixed points of the closure operator $\VV\circ\CC$ are then, by definition,  the affine sets in $k^{n}$. Since $\VV\circ\CC$ is a \emph{topological} closure operator ---i.e.\ it commutes with finite unions--- affine algebraic sets are  the closed sets of a topology on $k^{n}$, the  \emph{Zariski topology}. The fixed points of the dual closure operator $\CC\circ\VV$, on the other hand, are characterised by Hilbert's {\it Nullstellensatz}: they are precisely the \emph{radical ideals} of $k[X]$, that is, those ideals that coincide with the intersection of all prime ideals containing them. The {\it Nullstellensatz} thus characterises coordinate rings, for  $k[X]/I$ is one such if, and only if, $I$ is radical. Since radical ideals may in turn be elementarily characterised  as those ideals $I$ such that $k[X]/I$ has no non-zero nilpotents, coordinate rings are precisely the finitely presented nilpotent-free (or \emph{reduced}) $k$-algebras.

The Galois connection given by the pair $(\CC,\VV)$ in (\ref{intro:c}--\ref{intro:v}) can be made functorial. On the algebraic side we consider the category of finitely presented $k$-algebras with their homomorphisms. On the geometric side we take sets $S$ equipped with a specific embedding $S\hookrightarrow k^{n}$, as $n$ ranges over the natural numbers. It is important not to blur the distinction between  $S$ and its embedding into $k^{n}$, because arrows in the geometric category are to be defined \emph{affinely}, i.e.\ by restriction from $k^{n}$. An arrow from $S\hookrightarrow k^{n}$ to $T\hookrightarrow k^{m}$ is a \emph{regular map} $S\to T$, that is, the equivalence class of a polynomial function $f\colon k^{n}\to k^{m}$ such that $f$ throws $S$ onto $T$; two such functions are equivalent if, and only if, they agree on $S$. There is a functor that associates to each regular map $S\to T$ a contravariant homomorphism of the (automatically presented) coordinate rings of $\VV\circ\CC{(T)}$ and $\VV\circ\CC{(S)}$. And there is a companion functor that associates to each homomorphism of presented $k$-algebras $k[X]/I\to k[Y]/J$, with $Y=\{Y_1,\ldots,Y_{m}\}$ and $J$ an ideal of $k[Y]$, a contravariant regular map $\VV{(J)}\to\VV{(I)}$. The two functors yield a contravariant adjunction; upon restricting each functor to the fixed points in each domain, one obtains the classical duality (=contravariant equivalence) between affine algebraic varieties and their  coordinate rings. Compare\footnote{Terminology: Hartshorne's corollary is stated for irreducible varieties, which he calls varieties {\it tout court}.} e.g.\  \cite[Corollary 3.8]{hartshorne1977algebraic}. 

The classical affine adjunction above can be generalised to any \emph{variety of algebras}, whether finitary to infinitary. 
For background on Birkhoff's theory of general, or ``universal'',  algebra, see e.g.\ \cite{birkhoff79, cohn81,  jacobson80, Burris:81}. Henceforth, variety (of algebras) means ``possibly infinitary variety (of algebras)'' in the sense of S{\l}ominsky \cite{Slominsky:1959} and Linton \cite{Linton:1965} (after Lawvere \cite{lawverereprint}).
In a general variety the \emph{free algebras} play the same r\^{o}le as the ring of polynomials in the above correspondence. Congruences of  free algebras replace ideals of  rings of polynomials. The  ground field $k$ is replaced by an arbitrary but fixed algebra $A$ in the variety.  Beginning with Section \ref{sec:algebraic} we show that the classical affine adjunction for $k$-algebras  extends \textit{verbatim} to this general algebraic setting. 
\begin{remark}\label{r:presentability}By definition, the coordinate rings  $k[X]/I$  are \emph{presented}, that is,  they come with a specific defining ideal $I$.    For our purposes here the presented and the \emph{presentable} objects are to be kept distinct.  If $\Va$ is a variety, we indicate by $\Vap$ the category of presented algebras in the variety. Then $\Va$ and $\Vap$ are equivalent categories; cf.\ Remark \ref{rem:Vpequiv} below.
\end{remark}
 For any $\Va$-algebra $A$, in Corollary \ref{cor:algadj} we obtain an adjunction between $\Vap^{\rm op}$, the opposite of the category of presented $\Va$-algebras, and the category of subsets of $A^\mu$, as $\mu$ ranges over all cardinals, with  definable maps as morphisms. Here, the definable maps are those that are term-definable. The functors that implement the adjunction act on objects by taking a subset $R\seq \F{(\mu)}\times\F{(\mu)}$ (where $\F{(\mu)}$ is ``the'' free $\Va$-algebra on $\mu$ generators) ---that is, a ``system of equations in the language of $\Va$''---  to its solution set $\VV{(R)}\seq A^\mu$, where $\VV{(R)}$ is the set of elements of $A^\mu$ such that each pair of terms in $R$ evaluate identically over it, and a subset $S\hookrightarrow A^\mu$ to its ``coordinate  $\Va$-algebra'', namely, $\F{(\mu)}/\CC{(S)}$, where $\CC{(S)}$ is the congruence on $\F{(\mu)}$ consisting of all pairs of terms that evaluate identically at each element of $S$.

The identification of the fixed points of this adjunction on the algebraic side leads to a result formally analogous to the ring-theoretic {\it Nullstellensatz}. The final result is stated as Theorem \ref{thm:algnull}. Additionally, the identification of an appropriate type of representation for those $\Va$-algebras that are fixed under the adjunction (in part (iii) of the theorem) leads to a result reminiscent of Birkhoff's subdirect representation theorem. The notorious failure of the latter for infinitary varieties is irrelevant for our purposes here. In fact, while our Theorem \ref{thm:algnull} may  be conceived of as a  version of the subdirect representation theorem that is  ``relative to the ground algebra $A$'', it is formally incomparable to Birkhoff's result: neither statement entails the other, in general; we return to this important point in Remark \ref{r:finitary-vs-inifinitary}. We also discuss, in Section \ref{topNullstellensatz}, the problem of characterising the fixed points of the adjunction on the topological side, establishing some partial results which we subsequently use for deriving `presented versions' of Stone and Gelfand dualities from our framework. 

At the beginning of the paper, starting from Section \ref{s:affadj}, we show how to lift the adjunction of Corollary \ref{cor:algadj}  from the algebraic setting to a more general categorical context. See Table \ref{tab:transl} for a short summary of the correspondence between concepts in the three settings.    Conceptually, the key ingredient in the algebraic construction sketched above is the functor $\I_{A}=\hom_{\Va}(-, A) \colon \T \to \Set$ from the opposite $\T$ of the category of free $\Va$-algebras to $\Set$. In our categorical abstraction, the basic {\it datum} is any functor $\I \colon \T \to \S$, which we conceive as the \emph{interpretation} of the ``syntax'' $\T$ into the ``semantics'' $\S$,  along with a distinguished object $\a$ of $\T$. (In the algebraic specialisation, $\a$ is $\F{(1)}$, the free singly generated $\Va$-algebra.)  Here $\T$ and $\S$ are  arbitrary (locally small\footnote{All  categories in this paper are assumed to be locally small.}) categories, and  $\S$ is well-powered.\footnote{That is, each object of $\S$ has a small poset of subobjects.} Out of these ingredients we construct two categories $\D$ and $\Q$ of subobjects and quotients, respectively.  
The category $\D$ abstracts that of sets affinely embedded into $A^{\mu}$; here, sets are replaced by objects of $\S$, the powers $A^{\mu}$ are replaced by objects $\I(t)$ as $t$ ranges over objects of $\T$, and the  morphisms of $\S$ that are ``definable'' are declared to be  those in the range of $\I$.  The category $\Q$ abstracts the category of quotients of the free $\Va$-algebras $\F{(\mu)}$ by congruences; its objects are quotients of the hom-set $\hom_{\T}(t,\a)$, as $t$ ranges over objects of $\T$, by ``$\a$-congruences'' on them.  It is possible to define the operator $\VV$ in this setting under the hypothesis that $\S$ has enough limits (Assumption \ref{ass:limits} below), because ``solutions'' to ``systems of equations'' are computed by intersecting solutions to ``single equations''.  In order to define the operator $\CC$ we need to postulate that the functor $\I$ preserves all existing powers of $\a$ (Assumption \ref{ass:product-preserving} below), for this guarantees that $\CC$ ranges over $\a$-congruences.  The pair $(\CC,\VV)$ yields a Galois connection (Lemma \ref{lem:galois}) that satisfies an appropriate abstraction of the {\it Nullstellensatz}, as we show in Theorem \ref{thm:null}. Moreover, the Galois connection  lifts to an adjunction between $\D$ and $\Q$ (see Theorem \ref{thm:weakadj}), provided we assume  (Assumption \ref{ass:cosep} below) that $\a$ be an \emph{$\I$-coseparator}; see Definition \ref{def:coseparator}. 
\begin{table}[h!]
\adjustbox{max width=\columnwidth}{
\begin{tabular}{|p{4.8cm}|p{4.5cm}|p{3.5cm}|}
\hline
\textbf{Algebraic geometry} & \textbf{Universal algebra} & \textbf{Categories} \\ \hline
Ground field $k$                        & Any algebra $A$ in $\Va$                      & Functor $\I:\T \to \S $                \\ \hline
Class of $k$-algebras                    & Any variety $\Va$                      & Category $\Q$                 \\ \hline
$k[X_{1}, \ldots, X_{n}]$             & Free algebras $\mathcal{F}(\mu)$                         & Objects in $\T$                   \\ \hline
Polynomial  & Element of $\mathcal{F}(\mu)$ & $\T$-arrow $t\to\a$ \\ \hline
Ideal             & Congruence on of $\mathcal{F}(\mu)$                        & $\a$-congruence                  \\ \hline
Regular map                           & Term-definable map                        	& Definable map                    \\ \hline
Co-ordinate algebra of $S$       & Algebra presented by $\CC(S)$                    & Pair $(t, \CC{(S)})$ in $\Q$         \\ \hline
Affine variety                        & $\VV\circ\CC$-closed set             & Pair $(t,\VV{(R)})$ in $\D$                           \\ \hline 
\end{tabular}}
\vspace{0.3cm}
\caption{Corresponding concepts in the geometric, algebraic, and categorical settings.}\label{tab:transl}
\end{table}
This category-theoretic generalisation indicates that one can construct general affine adjunctions well beyond the standard setting of universal algebra investigated in Section \ref{sec:algebraic}; see Section \ref{syntacticcategories} for a discussion of possible applications in the context of syntactic categories and theories of presheaf type. 

In Section \ref{s:concreteness} we undertake the task of comparing our theory with the well-established theory of concrete dual adjunctions, through a number of constructions, results, and comments that help clarifying the relationship between the two theories. We postpone the relevant bibliographic references to that section. Here, on the other hand, we  mention some further literature that relates to our paper. 

In \cite{Diers} Diers develops a framework that abstracts the classical ring-theoretic adjunction recalled above to
 (possibly infinitary) varieties of algebras. For any given algebra $L$ in a  variety he establishes an adjunction between a 
category of ``affine subsets'' over $L$ and a category of ``algebraic systems'', as well as an adjunction between a category of 
``affine algebraic sets'' and the category of algebras of the given sort. This second adjunction restrict  to a duality between the former category and a 
category of ``functional algebras'' over $L$. Diers  does not focus on presented algebras, as we do with the intent of providing a direct abstraction of the notion of polynomial. Diers' affine (algebraic) sets amounts to  pairs $(X, A(X))$ consisting of a set $X$ and  a subalgebra $A(X)$ of the algebra $L^{X}$, and thus differ from ours. They also differ in another, more general respect, in that they  do not come  with an explicit embedding into an ambient space, as ours do.

In \cite{daniyarova2012algebraic} and in subsequent and related papers by these and other authors (see e.g.\ also \cite{Plotkin}), various elements of  algebraic geometry are developed for finitary varieties of algebras. Extensions of the theory to more expressive languages, such as Horn  or full first-order languages, are also considered.  The authors' main starting point is a circle of significant results in group theory (please see the introduction to \cite{daniyarova2012algebraic} for details). Their objective is to obtain a theory that affords the application and extension to other algebraic structures of the methods successfully applied in that group-theoretic context. The more general portions of the theory relate closely, at least in spirit, to what we try to do here. However, we are primarily concerned with gaining a deeper understanding of the basic adjunction at hand, and of its relation to the theory of concrete dual adjunctions. From this point of view, 
 \cite{daniyarova2012algebraic} and the related papers we know are uninformative. Theorem 5.6 in  \cite{daniyarova2012algebraic} does prove a dual equivalence between a category of ``algebraic sets over an algebra'' and a category of ``coordinate algebras'' for those algebraic sets; and, unlike the case of Diers' paper, these notions do coincide with ours in the case of finitary varieties. However, the authors' equivalence is not shown to descend  from a more general dual adjunction, nor is the adjoint to the functor inducing the dual equivalence described: the authors prove their theorem showing that the functor is fully faithful and essentially surjective. No mention of the theory of concrete dual adjunctions is made. Parts of our paper may thus be seen as a deepening and an extension of some of the more general results of   \cite{daniyarova2012algebraic}. 

A number of classical Stone-type dualities may be obtained by applying the theory developed in this paper. In Section \ref{p:dualities} we outline the method in the prototypical cases of Boolean algebras and commutative unital $C^{*}$-algebras.  A detailed proof of a duality theorem along the lines of the theory developed in  Section \ref{sec:algebraic} can be found in \cite{MarSpa12}, where semisimple MV-algebras are shown to be equivalent to closed subspaces of Tychonoff cubes with appropriate continuous maps. (On various aspects of the duality theory of  MV-algebras the interested reader can also see \cite{CaSp17,Marra-Reggio,MarSpa13}.)

Finally, we point out that the connection between general {\it Nullstellens\"atze}  and Birkhoff's subdirect representation theorem is addressed in \cite{tholen2013nullstellen}.


\section{The affine adjunction}\label{s:affadj}
 If $x$ and $y$ are objects in a category $\Cc$, and $f\colon x \to y$ is an arrow in $\Cc$, we write $\hom_{\Cc}{(x,y)}$ to denote the collection of arrows in $\Cc$ from $x$ to $y$, and $\dom{f}$ to denote the domain $x$ of $f$.

We consider the following:
\begin{itemize}
	\item Two categories $\T$ and $\S$, with $\S$ well-powered.
	\item A functor $\I \colon \T \to \S$.
	\item An object $\a$ of $\T$.
\end{itemize}
\subsection{The Galois connection given by the operators $\VV$ and $\CC$}\label{ss:CV}
For an object $t$,  by a \emph{$\a$-relation on $t$}, or simply  \emph{relation}, we mean a binary relation on the set $\hom_\T{(t,\a)}$ of $\T$-morphisms from $t$ to $\a$.
Along with relations, we consider subobjects in $\S$ defined by $\I$. That is, we consider pairs
$(t,s)$ where $t$ is a $\T$-object  and $s \colon \dom{s} \to\I(t)$ is an $\S$-subobject.  For a given $\T$-object $t$, we write
\[
\Sub{\I(t)}
\]
for the set of all $\S$-subobjects of $\I(t)$.
\begin{definition}\label{def:C}
	Given $s\in \Sub{\I(t)}$, we set
	\begin{align}\label{eq:RS}
	\CC{(s)}\coloneqq\left\{(p,q)\in \hom_{\T}^{2}{(t, \a)} \mid \I(p)\circ s=\I(q)\circ s\right\}.
	\end{align}
\end{definition}
To define a $\VV$-operator corresponding to $\CC$, we need:
\begin{assumption}\label{ass:limits} The category  $\S$ has equalisers of pairs of parallel arrows, and intersections of arbitrary families of subobjects. We denote the intersection of a family $\{E_i\}_{i\in I}$ of $\S$-subobjects by $\bigwedge_{i \in I} E_i$. 
\end{assumption}

\begin{definition}\label{def:V}
	For any relation $R$ on $\hom_{\T}{(t,\a)}$, with $t$ an object of $\T$, we set
	\begin{align}\label{eq:SR}
	\VV{(R)}:=\bigwedge_{(p,q)\in R}\Eq{(\I(p),\I(q))},
	\end{align}
	where,  for  $(p,q)\in R$,  $\Eq{(\I(p),\I(q))}$ denotes the $\S$-subobject of $\I(t)$ given by the equaliser  in $\S$ of the  $\S$-arrows $\I(p), \I(q) \colon \I(t) \rightrightarrows \I(\a)$.
\end{definition}
Writing $2^{E}$ for the power set  of the set $E$, we obtain functions (implicitly indexed by $t$)
\begin{align}
\CC&\colon \Sub{\I(t)}\longrightarrow 2^{\hom^2_{\T}{(t, \a)}}, \nonumber\\ 
\VV&\colon 2^{\hom^2_{\T}{(t, \a)}}\longrightarrow \Sub{\I(t)} \nonumber
\end{align}
that yield (contravariant) Galois connections. Here, the power set is ordered by inclusion, and $\Sub{\I(t)}$ is ordered by the usual partial order on subobjects,\footnote{Thus, explicitly, if $x$ and $y$ are subobjects of $z$, $x\leq y$ if there is an arrow $m\colon \dom{x} \to \dom{y}$ such that $x=y\circ m$.}
which we denote $\leq$.
\begin{lemma}[Galois connection]\label{lem:galois}
	For any  $\T$-object $t$, any relation $R$ on $\hom_\T{(t,\a)}$, and any $\S$-subobject $s \colon \dom{s}\to \I(t)$, we have
	\begin{align}\label{eq:galois}
	R \subseteq \CC{(s)} \quad \quad \text{if, and only if,} \quad \quad s\leq \VV{(R)}.
	\end{align}
\end{lemma}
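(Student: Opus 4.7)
The plan is to unwind both definitions in terms of universal properties: $\CC(s)$ is defined by the equation $\I(p)\circ s=\I(q)\circ s$, and this is exactly the condition for $s$ to factor through the equaliser $\Eq(\I(p),\I(q))$. Since $\VV(R)$ is the intersection of such equalisers over $(p,q)\in R$, everything should reduce to the universal property of equalisers and the universal property of intersections of subobjects (both of which are available by Assumption \ref{ass:limits}).

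For the forward implication, I would assume $R\subseteq \CC(s)$ and fix any $(p,q)\in R$. By definition of $\CC(s)$ this gives $\I(p)\circ s=\I(q)\circ s$. The universal property of the equaliser then yields a (unique) factorisation of $s$ through $\Eq(\I(p),\I(q))$, i.e.\ $s\leq \Eq(\I(p),\I(q))$ as subobjects of $\I(t)$. Since this holds for every $(p,q)\in R$, the universal property of the intersection gives $s\leq \bigwedge_{(p,q)\in R}\Eq(\I(p),\I(q))=\VV(R)$.

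For the backward implication, I would assume $s\leq \VV(R)$ and fix any $(p,q)\in R$. By definition of the intersection, $\VV(R)\leq \Eq(\I(p),\I(q))$, so composing the two factorisations gives $s\leq \Eq(\I(p),\I(q))$. Hence $s$ factors through the equaliser, and composing with the two arrows $\I(p),\I(q)$ that are equalised gives $\I(p)\circ s=\I(q)\circ s$, i.e.\ $(p,q)\in \CC(s)$. Since $(p,q)\in R$ was arbitrary, $R\subseteq \CC(s)$.

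The proof is entirely formal and I do not foresee a genuine obstacle; the only point requiring minor care is that we are manipulating subobjects (equivalence classes of monomorphisms) rather than arrows on the nose, so the factorisations produced by the universal properties of the equaliser and of the intersection should be invoked as inequalities $\leq$ between subobjects rather than as equalities between arrows. Assumption \ref{ass:limits} guarantees both the existence of the equalisers and of the (possibly large) intersection appearing in the definition of $\VV(R)$, so no set-theoretic issue arises.
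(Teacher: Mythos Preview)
Your proposal is correct and follows essentially the same route as the paper's proof: both directions are obtained by unwinding the definitions of $\CC(s)$ and $\VV(R)$ and appealing to the universal properties of equalisers and of intersections of subobjects. If anything, your version is slightly more explicit in naming these universal properties, whereas the paper simply says that $s$ ``must factor through $\VV(R)$'' and, conversely, composes the equalising equation $\I(p)\circ\VV(R)=\I(q)\circ\VV(R)$ with the factorising arrow $m$; the substance is identical.
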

\begin{proof}We have  $R \subseteq \CC(s)$ if, and only if, for any $(p,q) \in R$ it is the case that $\I{(p)}\circ s = \I{(q)}\circ s$. On the other hand, $s\leq \VV{(R)}$ if, and only if, there is an $\S$-arrow $m\colon \dom{s}\to \dom{\VV{(R)}}$ with $s=\VV{(R)}\circ m$.
	Now, if the former holds then $s$ must factor through $\VV{(R)}$ because the latter is defined in (\ref{eq:SR}) as the intersection of all $\S$-subobjects of $\I(t)$ that equalise some pair in $R$. Conversely, if the latter holds then for each $(p,q)\in R$ we obtain,  composing both sides of 
	$\I{(p)}\circ \VV{(R)} =\I{(q)}\circ\VV{(R)}$ with $m$, that  $\I{(p)}\circ s =\I{(q)}\circ s$.
\end{proof}

\subsection{From relations to congruences}\label{ss:FromRtoCong}
Consider again an object $t$ of $\T$. By a \emph{$\a$-congruence}  we mean an equivalence relation $R$ on $\hom_\T{(t,\a)}$ that satisfies the following property.
For any set $I$,  if the power $\a^{I}$ exists then for any morphism $g\colon\a^{I}\to \a$ and for any $I$-indexed family of pairs
\begin{align*}
P\coloneqq\big\{(p_{i},q_{i})\mid i \in I \text{ and } p_{i},q_{i}\in\hom_\T{(t,\a)}\big\},
\end{align*}
we have:
\begin{align}
\label{eq:abstract-cong} \text{If }P \seq R\text{ then } \left(g\circ \sprod{i\in I}p_{i},\ g\circ \sprod{i\in I}q_{i}\right)\in R.
\end{align}
In the Galois connection of Lemma \ref{lem:galois} the relation $\CC{(s)}$ always is an equivalence relation. However, $\CC{(s)}$ need not be a $\a$-congruence. To guarantee that, we need:
\begin{assumption}\label{ass:product-preserving}
	The functor $\I$ preserves all existing powers of $\a$. That is, if $I$ is a set and $\a^I$ exists in $\T$, then $\I(\a^I)\cong\I(\a)^I$.
\end{assumption}

\begin{remark}Using equivalence relations in place of $\a$-congruences one can establish a weaker variant of the adjunction  in Theorem \ref{thm:weakadj} below. For such a variant Assumption \ref{ass:product-preserving} is not needed. Nonetheless, the assumption is convenient for the subsequent specialisation of Theorem \ref{thm:weakadj} to the algebraic setting, and instrumental in establishing the representability property expressed by Proposition \ref{l:Rrepresentable}. 	
\end{remark}

\begin{lemma}\label{l:C-congruence}
	For any $\T$-object $t$ and any $s\in \Sub{\I(t)}$, the set $\CC{(s)}$ defined in \eqref{eq:RS} is a $\a$-congruence  on $\hom_{\T}{(t, \a)}$.
\end{lemma}
\begin{proof}
	It is straightforward to check that $\CC{(s)}$ is an equivalence relation, so we only show that \eqref{eq:abstract-cong} holds.  Suppose the power $\a^{I}$ exists in $\T$ for some set $I$, and let $(p_{i},q_{i})\in \CC{(s)}$  for $i\in I$.  Applying $\I$ to the product arrow $\Pi_{i\in I}p_i$, we obtain the following diagram.
	\begin{center}
		\begin{tikzpicture}[scale=0.5]
		\node (a) at (5,8)   {$\dom(s)$};
		\node (c) at (5,6)   {$\I(t)$};
		\node (Dn) at (5,3)   {$\I(\a^{I})$};
		\node (d1) at (0,0) {$\I(\a)$};
		\draw [right hook->] (a) to node [midway,left] {$s$} (c);
		\draw [->] (Dn) -- (d1) node [midway,right] {$\I(\pi_{i})$};
		\draw [dashed,->] (c) to node [midway,right] {$\I\big(\sprod{i\in I}p_{i}\big)$} (Dn);
		\draw [->] (c) to [bend right] node[left, midway] {$\I(p_{i})$} (d1);
		\end{tikzpicture}
	\end{center}
	By Assumption \ref{ass:product-preserving} we have $\I(\a^{I})=\I(\a)^{I}$.  Hence, all arrows $\I(p_{\beta})\circ s$ must factor through $\I(\a^{I})$ and $\I\big(\Pi_{i\in I}p_{i}\big)\circ s$ is the unique arrow that factors them all.  There is a corresponding diagram  that uses the arrows $q_{i}$; the same argument yields that all arrows $\I(q_{i})\circ s$ factor through $\I\big(\Pi_{i\in I}q_{i}\big)\circ s$.  But $(p_{i},q_{i})\in \CC{(s)}$, and  by Definition \ref{def:C} we have  $\I(p_{i})\circ s=\I(q_{i})\circ s$, so that also their factorisations must be equal.  We conclude  
	\begin{align*}
	\I\big(\sprod{i \in I}p_{i}\big)\circ s=\I\big(\sprod{i\in I}q_{i}\big)\circ s\quad\text{ for each } i \in I.
	\end{align*}
	If $g\colon\a^{\alpha}\to \a$ is any  $\T$-arrow, 
	\begin{align*}
	\I\big(g\circ \sprod{i\in I}p_{i}\big)&\circ s=&\\
	&\I(g)\circ \I\big(\sprod{i \in I}p_{i}\big)\circ s=\I(g)\circ \I\big(\sprod{i \in I}q_{i}\big)\circ s&\\
	&&=\I\big(g\circ\sprod{i \in I}q_{i}\big)\circ s 
	\end{align*}
	for each $i \in I$.
	In light of Definition \ref{def:C} this entails  $\big(g\circ \Pi_{i \in I}p_{i}, g\circ\Pi_{i \in I}q_{i}\big)\in \CC{(s)}$, and the lemma is proved. 
\end{proof}
\subsection{The category $\Q$ of quotients modulo $\a$-congruence relations}\label{ss:R}
We next define the category $\Q$. Objects in $\Q$ are quotient sets 
\[
\frac{\hom_{\T}{(t, \a )}}{R},
\]
where $t$ is a $\T$-object and $R$ is a $\a$-congruence on $\hom_{\T}{(t, \a )}$. Sometime, we more succinctly indicate objects in $\Q$ as pairs $(t,R)$.  To define morphisms, let us first note 
that arrows in $\T$ may or may not preserve relations, in the following sense. Let  $f\colon t\to t'$ be a  $\T$-arrow. Given $\a$-relations $R$ and $R'$ on $t$ and $t'$, respectively, if the function 
\begin{align*}
-\circ f\colon  \hom_{\T}{(t', \a)} \to \hom_{\T}{(t, \a)}
\end{align*}
satisfies the property
\begin{align}\label{eq:Rarrow}
(p', q')\in R' \quad \Longrightarrow \quad (p' \circ f, q' \circ f)\in R,
\end{align}
we say that $f$ \emph{preserves $R'$} (\emph{with respect to $R$}). By definition, there is an arrow 
\[
(t,R) \overset{g^{\rm op}}{\longrightarrow} (t',R')
\]
in $\Q^{\rm op}$ precisely when there is a function
\[
\frac{\hom_{\T}{(t', \a )}}{R'}\overset{g}{\longrightarrow} \frac{\hom_{\T}{(t, \a )}}{R}
\] 
such that there exists an $R'$-preserving $\T$-arrow $f\colon t\to t'$ making the diagram
\begin{small}
	\[
	\begin{tikzpicture}[scale=0.6]
	\node (hom1R) at (0,0)   {$\hom_{\T}{(t', \a )}/R'$};
	\node (hom2R) at (5,0) {$\hom_{\T}{(t, \a )}/R$};
	\node (hom1) at (0,3) {$\hom_{\T}{(t',\a)}$};
	\node (hom2) at (5,3) {$\hom_{\T}{(t,\a)}$};
	\draw [->] (hom1) -- (hom2) node [above, midway] {$-\circ f$};
	\draw [->>] (hom1) -- (hom1R);
	\draw [->>] (hom2) -- (hom2R);
	\draw [->] (hom1R) -- (hom2R) node [below, midway] {$g$};
	\end{tikzpicture}
	\]
\end{small}commute, where the vertical arrows are the natural quotient functions. We then say that $f$ \emph{induces} $g^{\rm op}$ (and $g$).

\subsection{The category $\D$ of subobjects and definable morphisms}\label{ss:D}
Objects of $\D$ are all pairs $(t,s)$ where $t$ is $\T$-object  and $s \colon \dom{s} \to\I(t)$ is an $\S$-subobject.   If  $(t,s),(t',s')$ are objects in $\D$ and $g \colon \dom{s} \to \dom{s'}$ is an $\S$-arrow, then $g$ is a $\D$-arrow from  $(t,s)$ to $(t',s')$ if there exists a $\T$-arrow $f \colon t \to t'$ such that  the diagram below
\begin{small}
	\[
	\begin{tikzpicture}[scale=0.4]
	\node (S) at (0,0)   {$\dom{s}$};
	\node (VTS) at (0,5) {$\I(t)$};
	\node (T) at (5,0) {$\dom{s'}$};
	\node (VTT) at (5,5) {$\I(t')$};
	\draw [->] (S) -- (T) node [below, midway] {$g$};
	\draw [->] (VTS) -- (VTT) node [above, midway] {$\I(f)$};
	\draw [->] (S) -- (VTS) node [left, midway] {$s$};
	\draw [->] (T) -- (VTT) node [right, midway] {$s'$};
	\end{tikzpicture}
	\]
\end{small}
commutes.  We then say that $f$ \emph{induces} $g$.

\subsection{The functor  $\C\colon \D\to\Q^{\rm op}$} For any $\D$-object $(t, s)$, we set
\begin{align*}
\C(t, s):=\frac{\hom_{\T}{(t,\a)}}{\CC{(s)}}.
\end{align*}
If $g\colon(t, s)\to (t', s')$  is  a $\D$-arrow induced by $f\colon t\to t'$, we let 
\[
\C(f)\colon \frac{\hom_{\T}{(t',\a)}}{\CC{(s')}}\longrightarrow\frac{\hom_{\T}{(t,\a)}}{\CC{(s)}}
\]
be the $\Q$-arrow induced by $f$. 
\begin{lemma}
	The functor $\C$ is well-defined.
\end{lemma}
\begin{proof}
	We keep the notation used above in the definition of $\C$.  We first check that $f$ preserves $\CC(s')$ with respect to $\CC(s)$, i.e., the function
	\begin{align*}
	-\circ f\colon \hom_{\T}{(t', \a )} \to \hom_{\T}{(t, \a )} 
	\end{align*}
	satisfies $(p' \circ f, q' \circ f)\in \CC(s)$ for any $(p', q')\in \CC(s')$ as required by \eqref{eq:Rarrow}. Indeed,
	\begin{align}\label{eq:welldef1}
	\I(f)\circ s= s'\circ g \ ,
	\end{align}
	because  $g$ is induced by $f$.  Now,  given $p', q'\in \hom_{\T}{(t', \a )}$, assume $(p',q')\in \CC(s')$, that is 
	\begin{align}\label{eq:welldef2}
	\I(p')\circ s'=\I(q')\circ s'. 
	\end{align}
	Composing both sides of (\ref{eq:welldef2}) with $g$, and applying (\ref{eq:welldef1}), we obtain $\I(p'\circ f)\circ s = \I(q'\circ f)\circ s$, which shows $(p'\circ f,q'\circ f)\in \CC(s)$.
	Finally, the definition does not depend on the choice of the arrow inducing $g$.  Indeed, if $f$ and $f'$ induce the same $\D$-arrow $g$, then 
	\begin{equation}
	\label{eq:1}\I(f)\circ s=\I(f')\circ s\ .
	\end{equation}
	We need to show that the factorisations of $-\circ f, -\circ f'\colon \hom_{\T}{(t', \a )} \rightrightarrows \hom_{\T}{(t, \a )}$ through the quotient sets $\hom_{\T}{(t', \a )}/\CC(s')$ and $\hom_{\T}{(t, \a )}/\CC(s)$ are equal. That is, for all $\f\in\hom{(t',\a)}$ the equality $(\f\circ f)/\CC(s)=(\f\circ f')/\CC(s)$ holds, or equivalently, $((\f\circ f),(\f\circ f'))\in \CC(s)$.  The latter means by definition, cf.\ (\ref{eq:RS}), that $\I(\f\circ f)\circ s=\I(\f\circ f')\circ s$, which can be obtained from (\ref{eq:1}) above by composing both sides with $\I(\f)$.
\end{proof}

\subsection{The functor $\V\colon \Q^{\rm op} \to \D$} 
For any $\Q$-object $(t,R)$, we set
\begin{align*}
\V(t,R):=(t,\VV{(R)}).
\end{align*}
For a $\Q$-arrow  $g^{\rm op}\colon \hom_{\T}{(t',\a)}/{R'}\to \hom_{\T}{(t,\a)}/{R}$ induced by a $\T$-arrow $f\colon t\to t'$, we define $\V(g^{\rm op})$ to be the $\D$-arrow induced by $f$.

For the definition of $\V$ to make sense an assumption on $\a$ is needed. Let us  introduce a generalisation of the classical notion of coseparator.
\begin{definition}\label{def:coseparator}
	Given a functor $F\colon\A\to\B$ and an $\A$-object $x_{1}$ of $\A$, we say that $x_{1}$ is an \emph{F-coseparator} if for any $\A$-object $x_{2}$ and any pair of distinct $\B$-arrows $g_{1}\neq g_{2}\colon y\to F(x_{2})$ there is an $\A$-arrow $f\colon x_{2}\to x_{1}$ such that
	\[
	F(f)\circ g_{1}\neq F(f)\circ g_{2}.
	\]
\end{definition}
Observe that if $1_{\A}\colon \A\to\A$ is the identity functor, a $1_{\A}$-coseparator is just a coseparator in $\A$.
\begin{lemma}
	If $\a$ is an $\I$-coseparator, then the functor $\V$ is well-defined.
\end{lemma}
\begin{proof}
	We first need to show that $\I(f)\circ \VV{(R)}$ factors through $\VV{(R')}$. Indeed, let $p',q' \in \hom_{\T}{(t',\a)}$, and assume $(p',q') \in R'$. Then $(p' \circ f, q'\circ f)\in R$ because $f$ induces a $\Q$-arrow, and therefore $\I(p')\circ (\I(f) \circ \VV{(R)})=\I(q')\circ (\I(f)\circ \VV{(R)})$ for all $(p',q')\in R'$. By the universal property of the pullback $\VV(R')\coloneqq\bigwedge_{(p',q')\in R}\Eq{(\I(p'),\I(q'))}$ it follows that $\I(f)\circ \VV{(R)}$ factors through $\VV(R')$.
	
	To see that the definition does not depend on the choice of the arrow inducing $g^{\rm op}$ suppose that $f$ and $f'$ induce the same $g\colon (t, R)\to (t', R')$. This happens if, and only if, for all $\f\colon t'\to \a $ we have $(\f\circ f,\f\circ f')\in R$, which in turn entails $\I(\f\circ f)\circ \VV{(R)}=\I(\f\circ f')\circ \VV{(R)}$, by the definition (\ref{eq:SR}) of $\VV{(R)}$ as an intersection of equalisers.  Since $\a$ is assumed to be an $\I$-coseparator we conclude that $\I(f)\circ \VV{(R)}=\I(f')\circ \VV{(R)}$.  But $\VV{(R')}\circ \V(f)= \I(f)\circ \VV(R)$ and $\VV{(R')}\circ \V(f')= \I(f')\circ \VV(R)$ so, $\VV{(R')}\circ \V(f)=\VV{(R')}\circ \V(f')$ and since $\VV(R')$ is monic we finally obtain $\V(f)=\V(f')$.
\end{proof}
\begin{assumption}\label{ass:cosep}We henceforth assume that the $\T$-object $\a$ is an $\I$-coseparator.
\end{assumption}
\subsection{The dual adjunction}\label{ss:dualadj}\label{subs:weak}
The Galois connection (\ref{eq:galois}) lifts to an adjunction.

\begin{theorem}[Affine adjunction]\label{thm:weakadj}
	The functor $\C \colon \D\to\Q^{\rm op}$ is left adjoint to the functor $\V\colon \Q^{\rm op}\to\D$. In symbols, $\C\dashv \V$.
\end{theorem}   

\begin{proof}
	Let us show that for any $\D$-object $(t, s)$  and any $\Q^{\rm op}$-object $(t', R')$ we have a natural bijective correspondence between the $\Q^{\rm op}$-arrows $\C(t, s)=(t, \CC(s))\to (t', R')$ and the $\D$-arrows $(t, s)\to (t', \VV(R'))=\V(t', R')$.
	
	On the one hand, $(t, \CC(s))\to (t', R')$ is a $\Q^{\rm op}$-arrow if, and only if, it is induced by a $\T$-arrow $f\colon t\to t'$ such that for any $p', q' \in \hom_{\T}{(t', \a)}$, if $(p', q')\in R'$ then $\I(p') \circ \I(f)\circ s=\I(q') \circ \I(f) \circ s$. On the other hand, $f$ defines a $\D$-arrow $(t, s)\to (t', \VV(R'))$ in $\D$ if, and only if, $f\circ s$ factors through $\VV(R')$, i.e.\ for any $p', q' \in \hom_{\T}{(t', \a )}$, if $(p', q')\in R'$ then $\I(p') \circ \I(f)\circ s=\I(q') \circ \I(f) \circ s$. It is thereby clear that $\C\dashv\V$.
\end{proof}

\subsection{A  {\em Nullstellensatz} for the affine adjunction}\label{ss:weaknull}
Recall that a collection of arrows $A\subseteq \hom_\A{(x,y)}$ in a category $\A$ is \emph{jointly epic} if whenever $f_1,f_2\colon y\rightrightarrows z$ are $\A$-arrows with $f_1\circ g = f_2\circ g$ for all $g\in A$, then $f_1=f_2$.
\begin{theorem}\label{thm:null}
Fix a $\Q$-object $(t,R)$. For any family $\Sigma=\{\sigma_{i}\}_{i\in I}$ of subobjects of $\I{(t)}$ such that for each $\sigma_{i}$ there exists $m_{i}$ with $\sigma_{i}=\VV{(R)}\circ m_{i}$ \textup{(}i.e.\ $\sigma_{i}\leq \VV{(R)}$\textup{)} and the family of $\S$-arrows $\{m_{i}\}_{i\in I}$ is jointly epic in $\S$, the following are equivalent.
\begin{enumerate}[\textup{(}i\textup{)}]
\item $R=\CC{(\VV{(R)})}$, i.e.\ $R$ is fixed by the Galois connection \textup{(\ref{eq:galois})}.
\item $R=\bigcap_{i\in I}\CC(\sigma_{i})$.
\end{enumerate}
\end{theorem}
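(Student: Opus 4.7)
The plan is to reduce the equivalence to a single nontrivial inclusion by first establishing a ``sandwich'' chain of inclusions that holds unconditionally.

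\medskip

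\noindent\textbf{Step 1: the unconditional chain.} I would first observe that
\[
R \subseteq \CC{(\VV{(R)})} \subseteq \bigcap_{i \in I} \CC{(\sigma_{i})}
\]
always holds, regardless of the joint-epic hypothesis. The leftmost inclusion is an immediate consequence of Lemma \ref{lem:galois}: since $\VV{(R)} \leq \VV{(R)}$, the Galois connection (\ref{eq:galois}) gives $R \subseteq \CC{(\VV{(R)})}$. For the rightmost inclusion I would first note that $\CC$ is antitone on subobjects: if $\sigma = \tau \circ n$ and $(p,q) \in \CC{(\tau)}$, then composing $\I(p) \circ \tau = \I(q) \circ \tau$ on the right with $n$ gives $\I(p) \circ \sigma = \I(q) \circ \sigma$, so $(p,q) \in \CC{(\sigma)}$. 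Applied to each factorisation $\sigma_{i} = \VV{(R)} \circ m_{i}$ furnished by the hypothesis, this yields $\CC{(\VV{(R)})} \subseteq \CC{(\sigma_{i})}$ for every $i\in I$, hence the desired inclusion.

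\medskip

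\noindent\textbf{Step 2: (ii) $\Rightarrow$ (i).} This is then immediate and uses nothing about joint epicness. If $R = \bigcap_{i \in I} \CC{(\sigma_{i})}$, then the chain of Step 1 collapses, forcing $R = \CC{(\VV{(R)})}$ in particular.

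\medskip

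\noindent\textbf{Step 3: (i) $\Rightarrow$ (ii), and the main use of joint epicness.} Here the only missing inclusion is $\bigcap_{i \in I}\CC{(\sigma_{i})} \subseteq \CC{(\VV{(R)})}$, for then (i) closes the gap with $R$. Given $(p,q) \in \bigcap_{i}\CC{(\sigma_{i})}$, the definition (\ref{eq:RS}) yields $\I(p)\circ\sigma_{i} = \I(q)\circ\sigma_{i}$ for every $i$; substituting $\sigma_{i} = \VV{(R)}\circ m_{i}$ and using associativity, this rewrites as
\[
\bigl(\I(p)\circ \VV{(R)}\bigr)\circ m_{i} \;=\; \bigl(\I(q)\circ \VV{(R)}\bigr)\circ m_{i} \qquad \text{for every } i \in I.
\]
Joint epicness of $\{m_{i}\}_{i\in I}$ then allows me to cancel the $m_{i}$'s simultaneously to conclude $\I(p)\circ \VV{(R)} = \I(q)\circ \VV{(R)}$, i.e. $(p,q) \in \CC{(\VV{(R)})} = R$ by (i).

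\medskip

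\noindent The proof is really more bookkeeping than anything else; the only ``non-formal'' ingredient is the use of joint epicness in Step 3, and the crucial move is to recognise that the prescribed factorisation $\sigma_{i} = \VV{(R)}\circ m_{i}$ is precisely what permits the transfer of equality from each $\sigma_{i}$ to $\VV{(R)}$ via cancellation of the $m_{i}$'s. I do not foresee any serious obstacle.
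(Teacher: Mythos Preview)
Your proof is correct and follows essentially the same approach as the paper's, with only a cosmetic difference in organisation: you establish the chain $R \subseteq \CC{(\VV{(R)})} \subseteq \bigcap_{i}\CC{(\sigma_{i})}$ upfront, whereas the paper derives the second inclusion separately within the (ii)$\Rightarrow$(i) direction. The substantive step---using joint epicness of the $m_{i}$ to pass from $\I(p)\circ\sigma_{i}=\I(q)\circ\sigma_{i}$ to $\I(p)\circ\VV{(R)}=\I(q)\circ\VV{(R)}$---is identical in both.
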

\begin{proof}
First observe that the Galois connection (\ref{eq:galois}) in Lemma \ref{lem:galois} implies the \emph{expansiveness} of $\CC\circ\VV$, i.e.
\begin{align}\label{eq:contained1}
R\seq \CC{(\VV{(R)})}\ .
\end{align}
Further, since each $\sigma_{i}\leq \VV{(R)}$, again by general properties of Galois connections, it follow that
\begin{align}\label{eq:contained2}
R\seq \bigcap_{i\in I}\CC(\sigma_{i})\ .
\end{align}

\smallskip \noindent (i)$\Rightarrow$(ii)\, 
As by hypothesis $R=\CC{(\VV{(R)})}$, by (\ref{eq:contained2}) above, it is enough to prove 
\[\bigcap_{i\in I}\CC(\sigma_{i})\seq \CC{(\VV{(R)})}\ .\] 
  If $(p,q)\in \bigcap_{i\in I}\CC(\sigma_{i})$, then for every $\sigma_{i}\in \Sigma$, $\I(p)\circ \sigma_{i}=\I(q)\circ \sigma_{i}$.  By hypothesis, the latter can be rewritten as $\I(p)\circ \VV{(R)}\circ m_{i}=\I(q)\circ \VV{(R)}\circ m_{i}$. Now, the family of factorisations $\{m_{i}\}_{i\in I}$ is jointly epic in $\S$, hence we obtain $\I(p)\circ \VV{(R)}=\I(q)\circ \VV{(R)}$, which proves $(p,q)\in \CC{(\VV{(R)})}$.

\smallskip \noindent (ii)$\Rightarrow$(i)\,
By (\ref{eq:contained1}) above and the hypothesis (ii), it is enough to prove 
\[\CC{(\VV{(R)})}\subseteq \bigcap_{i\in I}\CC(\sigma_{i})  .\] 
Suppose that $(p,q)\in\CC{(\VV{(R)})}$, i.e.\  $\I(p)\circ \VV{(R)}=\I(q)\circ \VV{(R)}$. By composing on the right with $m_i$ we obtain,
for all $\sigma_{i}\in \Sigma$, $\I(p)\circ \VV{(R)}\circ m_{i}=\I(q)\circ \VV{(R)}\circ m_{i}$.  Applying the above commutativity of $\sigma_{i}$ one obtains $\I(p)\circ \sigma_{i}=\I(q)\circ\sigma_{i}$.  The latter entails that, for all $i\in I$, $(p,q)\in \CC{(\sigma_{i})}$, whence $(p,q)\in \bigcap_{i\in I}\CC(\sigma_{i})$. 
\end{proof}

\begin{remark}
Notice that a   family $\Sigma$ satisfying the hypotheses of Theorem \ref{thm:null} always exists, namely the singleton $\dom{\VV{(R)}}$ with the arrow $\VV{(R)}$.  In this case the theorem has no actual content.  By contrast, when the category $S$ is $\Set$,  one can chose as $\Sigma$ the family of maps with domain the singleton $\{*\}$.  The family $\Sigma$ is obviously jointly surjective, and Theorem \ref{thm:null} can be restated in a more concrete form as follows.
\end{remark}
\begin{theorem}\label{thm:null-in-set}
Suppose $\S=\Set$.  For any $\Q$-object $(t,R)$ the following are equivalent.
\begin{enumerate}[\textup{(}i\textup{)}]
\item $R=\CC{(\VV{(R)})}$,
\item $R=\bigcap_{\sigma\leq \VV{(R)}}\CC(\sigma)$, where $\sigma$ ranges over all $\S$-subobjects $\{*\}\to \I(t)$.
\end{enumerate}
\end{theorem}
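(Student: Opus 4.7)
The plan is to obtain this as a direct specialization of the weak \emph{Nullstellensatz} (Theorem \ref{thm:null}) by making a judicious choice of the family $\Sigma$. Concretely, I would take as $\Sigma$ the family of all subobjects $\sigma\colon *\to \I(t)$ that satisfy $\sigma\leq \VV{(R)}$, so that (ii) of Theorem \ref{thm:null-in-set} becomes (ii) of Theorem \ref{thm:null} with this choice of $\Sigma$. Once I show the hypotheses of Theorem \ref{thm:null} are met, the equivalence is immediate.

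The key step is to verify that the family of factorisations $\{m_{\sigma}\colon * \to \dom{\VV{(R)}}\}$, where $m_{\sigma}$ is the unique $\S$-arrow with $\sigma=\VV{(R)}\circ m_{\sigma}$, is jointly epic in $\Set$. In $\Set$, a subobject $\sigma\colon *\to \I{(t)}$ is the same as a choice of element of $\I{(t)}$, and $\sigma\leq\VV{(R)}$ holds if and only if this element lies in the subset $\dom{\VV{(R)}}\subseteq \I{(t)}$; its factorisation $m_{\sigma}$ is then the corresponding element of $\dom{\VV{(R)}}$ viewed as a map from the terminal object. As $\sigma$ ranges over all such subobjects, the maps $m_{\sigma}$ range over all elements of $\dom{\VV{(R)}}$. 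Since morphisms in $\Set$ are determined by their values on elements, any two parallel arrows out of $\dom{\VV{(R)}}$ that agree after precomposition with every $m_{\sigma}$ must coincide; hence the family is jointly epic. (The edge case $\dom{\VV{(R)}}=\emptyset$ is harmless, since the empty family is jointly epic from the initial object.)

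With the hypotheses of Theorem \ref{thm:null} verified for this $\Sigma$, its conclusion reads exactly as the desired equivalence (i)$\Leftrightarrow$(ii) of Theorem \ref{thm:null-in-set}. I do not foresee a real obstacle: the content is entirely in the simple observation that in $\Set$ the global points of an object form a jointly epic family, i.e.\ that $\Set$ is well-pointed, which is what allows the abstract hypothesis in Theorem \ref{thm:null} to be replaced by the concrete quantification over points $\sigma\colon *\to\I{(t)}$ factoring through $\VV{(R)}$.
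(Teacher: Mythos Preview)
Your proposal is correct and follows essentially the same approach as the paper: the paper simply observes, in the remark preceding the theorem, that when $\S=\Set$ one may take $\Sigma$ to be the family of maps from the singleton $*$ into $\I(t)$ factoring through $\VV{(R)}$, notes that this family is ``obviously jointly surjective'', and then invokes Theorem~\ref{thm:null}. Your argument spells out the verification of the jointly-epic hypothesis in slightly more detail (including the empty case), but the strategy is identical.
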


\section{Concreteness and representability}\label{s:concreteness}We would like to compare the adjunction of Theorem \ref{thm:weakadj} to the theory of dual adjunctions induced by a dualising object. The literature on the latter topic is considerable. The monographs \cite{johnstone, clark1998natural} contain extensive bibliographies. The general theory is concisely but comprehensively developed in \cite{DT89} and \cite{PT91}. It will be convenient for our purposes to use \cite{PT91} as main reference in this section.

\subsection{Concreteness of $\Q$}\label{ss:concreteM}
There is an obvious faithful functor
\begin{align}
\label{eq:Rforget}
U_{\Q} \colon \Q\longrightarrow \Set,
\end{align}
namely,  the inclusion functor of $\Q$ into $\Set$. Thus, $\Q$ comes equipped with the structure of a concrete category.
\begin{proposition}\label{l:Rrepresentable}The faithful functor $U_{\Q} \colon \Q\longrightarrow \Set$ in \eqref{eq:Rforget} is represented by the object \[\frac{\hom_{\T}{(\a,\a)}}{\id_{\a}}\] of $\Q$, where $\id_{\a}$ denotes the identity relation on the set $\hom_{\T}{(\a,\a)}$.
\end{proposition}
\begin{proof}
	We need to provide a set-theoretical bijection $$\hom_{\Q}((\a, \id_{\a}), (t, R))\cong \hom_{\T}(t, \a)\slash R$$ naturally in $(t, R)\in \Q$. By definition, the arrows $(\a, \id_{\a}) \to (t, R)$ in $\Q$ are the arrows $f\colon t \to \a$ in $\T$ (as all of them preserve $\id_{\a}$), modulo the equivalence relation $\rho$. Recall that $f\rho f'$ if, and only if, the factorisations of $-\circ f, -\circ f'$  through $\hom_{\T}(\a, \a)\slash \id_{\a}$ and $\hom_{\T}(t, \a)\slash R$ (which by an abuse of notation we still indicate by $-\circ f$ and $-\circ f'$) are equal. This latter condition is equivalent to saying that $(f, f')\in R$. Indeed, if $-\circ f= -\circ f'$ then $(-\circ f)(1_{\a})=(-\circ f')(1_{\a})$ in $\hom_{\T}(t, \a)\slash R$, i.e.\ $(f, f')\in R$. For the other implication, notice that, by assumption, $R$ is a $\a$-congruence. This implies that, for any arrow $g\in\hom_{\T}(\a,\a)$, $(g\circ f,g\circ f')\in R$, hence $(-\circ f)(g)$ is equal to $(-\circ f')(g)$ in $\hom_{\T}(t, \a)\slash R$. This proves that sending a $\Q$-arrow from $(\a,\id_{\a})$ to $(t,R)$ into its $R$-equivalence class in $\hom_{\T}(t,\a)$ gives a bijection, which is clearly natural.
\end{proof}

\subsection{(Lack of) concreteness of $\D$}\label{ss:notconcrete} Contrary to what we just saw for $\Q$, it is not clear how to equip $\D$ with the structure of a concrete category. On the other hand, more can be said if we restrict to the case\footnote{We mention that, {\it mutatis mutandis}, we could develop some of the  considerations that follow  under the more general assumption that $\S$ has itself the structure of a concrete category.} $\S=\Set$. There is then a  forgetful functor
\begin{align}
\label{eq:Sforget}
U_{\D} \colon \D\longrightarrow \Set
\end{align}
that sends the object $(t,s)$ of $\D$ to $\dom(s)$, and acts on  arrows accordingly. Although we do not have a complete characterisation of when $U_{\D}$ is representable, nor do we know whether one is feasible, we can at least relate the representability of $U_{\D}$ to that of the functor $\I$, as follows.
\begin{proposition}\label{p:repres}
	The following conditions are equivalent:
	\begin{enumerate}[\textup{(}i\textup{)}]
		\item The functor $\I$ is representable.
		
		\item The functor $U_{\D}$ in \eqref{eq:Sforget} is representable by an object of the form $(\b, 1)$ where $1\colon\{\ast\}\to \I(\b)$ and the element $1(\ast)\in \I(\b)$ satisfies the property that the arrows $f\colon\b \to t$ in $\T$ are in bijection with the elements of $\I(t)$ via the assignment $f\mapsto \I(f)(1(*))$.
	\end{enumerate}
\end{proposition}  

\begin{proof}
	Suppose that the functor $\I$ is representable and that $\b$ is the object representing it. Let us prove that the functor $U_{\D}$ is represented by the object $(\b, 1)$, where $1$ is the subobject $\{\ast\} \to \I(\b)\cong \hom_{\T}(\b, \b)$ picking out the identity on $\b$. Clearly, the element $1(\ast)$ will satisfy the condition in (ii) by definition of a representable functor.
	
	We have to show that that there is a natural bijective correspondence between the arrows $(\b, 1) \to (t, s)$ in the category $\D$ and the elements of $\dom(s)$. For this, it suffices to recall that giving an arrow $(\b, 1) \to (t, s)$ in $\D$ consists precisely in giving an arrow $\dom(1)=\{\ast\}\to \dom(s)$, that is an element $x$ of $\dom(s)$, induced by an arrow $t\colon\b \to t$ in $\T$. But this latter condition is satisfied if we take $t$ equal to the element of $\I(t)\cong \hom_{\T}(\b, t)$ given by the image of $x$ under $s$, whence the arrows $(\b, 1) \to (t, s)$ in $\D$ correspond precisely to the elements of $\dom(s)$.  
	
	Conversely, suppose that $(\b, 1)$, where $1\colon\{\ast\}\to \I(\b)$, is a representing object for $U_{\D}$ such that the element $1(\ast)$ satisfies the property in (ii). Then for any object $t$ of $\T$, the map which assigns to any arrow $f:\b \to t$ in $\T$ the element $\I(f)(1(\ast))$ is a bijection from $\hom_{\T}(\b, t)$ onto the set of elements of $\I(t)$. This implies in particular that $\I$ is represented by the object $\b$.
\end{proof}

\begin{remark}
	The condition that the functor $U_{\D}$ be represented by an object $(\b, 1)$ satisfying the requirements in condition (ii) of Proposition \ref{p:repres} is notably met in a number of classical dualities, e.g.\ Stone duality for Boolean algebras or Stone-Gelfand duality for commutative unital C$^*$-algebras (see Section \ref{p:dualities}). Let us emphasise, moreover, that the condition in (ii) on the element $1(\ast)$ is automatically satisfied if $\I(\b)$ is a singleton.\end{remark}

The point of Proposition \ref{p:repres} is that whenever the functor $U_{\D}$ is representable and faithful then, in view of Proposition \ref{l:Rrepresentable}, the theory of concrete dualities \cite{PT91, DT89} applies to the adjunction $\C\dashv \V$ of Theorem \ref{thm:weakadj}, providing a wealth of information. Thus (see \cite[Section 1-B]{PT91}), the contravariant functors $U_{\D}\circ \V\colon \Q\to \Set$ and $U_{\Q}\circ \C\colon \D\to \Set$ are represented by the objects 
\begin{align*}
a& \coloneqq \C{(\b)} \text{ and}\\
b &\coloneqq\V(\,(\a,\id_{\a})\,)
\end{align*}
of $\Q$ and $\D$, respectively. Further,  there is a bijection of the underlying sets
\begin{align}\label{eq:dualisingbij}
\beta\colon U_{\D}(a)\cong U_{\Q}(b).
\end{align}
Therefore $a$ and $b$ may be conceived of as a single set equipped with both the structure of a $\Q$-object and with the structure of a $\D$-object. Up to the bijection \eqref{eq:dualisingbij}, moreover, the components of the unit and co-unit of   the  adjunction  are necessarily given ``by evaluation'' in the precise sense of \cite[p.\ 115]{PT91}. The triple $(a,\beta,b)$ is  a \emph{dualising} or \emph{schizophrenic object} for the concrete categories $\Q$ and $\D$. As far as we see, the concrete  adjunction $\C\dashv \V$ induced by $(a,\beta,b)$ need not be natural in the sense of \cite[Section 1-C]{PT91}, meaning that certain initial lifts may not exist  --  cf.\ conditions (SO1) and (SO2) of \cite[p.\ 116]{PT91} for details.

\subsection{Representable extensions of $\I$}\label{ss:yoneda} While the representability of the functor $\I$ has the important consequence of inducing the structure of a concrete category on $\D$ through the functor $U_{\D}$ of Proposition \ref{p:repres}, in general $\I$ will fail to be representable. We will nonetheless see in Remark \ref{extendedrepresentability} that $\I$ always admits an appropriate extension to a representable functor in the context of varieties. Let us then consider the functor
\begin{align*}
\hom_{\T}{(-,\a)}\colon \T^{\rm op}\longrightarrow \Set.
\end{align*}
In order to compare this to the functor $U_{\Q}$ in \eqref{eq:Rforget}, and motivated by the well-understood case of algebraic theories, we define the functor
\begin{align*}
I_{\a}^{\rm op}\colon\T^{\rm op}\longrightarrow \Q
\end{align*}
that sends an object $t$ to the $\Q$-object
\[
\frac{\hom_{\T}(t, \a)}{\id_{t}},
\]
where $\id_{t}$ is the identity congruence relation on $\hom_{\T}(t, \a)$. Concerning  arrows, note that any $\T$-arrow $f\colon t \to t'$  necessarily preserves the identity congruence $\id_{t}$, and induces precisely one  $\Q$-arrow $g_{f}\colon (t',R)\to (t,R)$, again because  $\id_{t}$ is the identity relation. We set
$I_{\a}^{\rm op}(f)\coloneqq g_{f}$. Observe moreover that $I_{\a}^{\rm op}$ is always full, because  $\Q$-arrows must be induced by $\T$-arrows, and it is faithful if and only if $\a$ is a coseparator in $\T$.

Note that, by definition of the functors $U_{\Q}$ and $I_{\a}^{\rm op}$, the diagram
	\[
	\begin{tikzpicture}[scale=0.45]
	\node (T) at (0,5) {$\T^{\rm op}$};
	\node (A) at (6,2) {$\Q$};
	\node (S) at (6,5) {$\Set$};
	\draw [->] (T) -- (A) node [below, midway] {{\footnotesize $I_{\a}^{\rm op}$}};
	\draw [->] (T) -- (S) node [above, midway] {{\footnotesize $\hom_{\T}{(-,\a)}$}};
	\draw [->] (A) -- (S) node [right, midway] {{\footnotesize $U_{\Q}$}};
	\end{tikzpicture}
	\]
commutes.

Now, changing the variance, we consider the functor
\[
I_{\a}\colon \T\longrightarrow \Q^{\rm op}
\]
determined by $I_{\a}^{\rm op}$ in the obvious manner.
In those cases when  $\I\colon\T\to\Set$ fails to be representable, but does extend to a representable functor $\I'$ on $\Q^{\rm op}$ along the functor $I_{\a}$, we can  guarantee that the representable extension $\I'$ still satisfies our basic Assumptions \ref{ass:limits}--\ref{ass:cosep}. 

\begin{proposition}\label{p:restriction}
	Suppose that $\I'\colon\Q^{\rm op} \to \S$ is a functor such that $\I'\circ I_{\a}=\I$. If $\I$ and $\a$ satisfy Assumptions \ref{ass:limits}, \ref{ass:product-preserving}, and \ref{ass:cosep}, then so do $\I'$ and $\a'\coloneqq I_{\a}(\a)$.
\end{proposition}

\begin{proof}
We start noticing that for any object $(t, R)$ of $\Q$, we have a monic arrow $i_{(t, R)}\colon(t, R) \to \I_{\a}(t)$ in $\Q^{\rm op}$ which corresponds to the canonical projection $\hom_{\T}{(t,\a)} \to \hom_{\T}{(t,\a)}\slash R$, and for any arrow $f\colon(t, R)\to (t', R')$ in $\Q^{\rm op}$, we have that $i_{(t', R')}\circ f=I_{\a}(f)\circ i_{(t, R)}$ in $\Q^{\rm op}$. 

Since the category $\S$ does not change in passing from $\I$ and $\a$ to $\I'$ and $\a'$, Assumption $1$ is satisfied in the latter setup if it is in the former.

The fact that $\I'$ preserves all existing powers of $\a':=\I_{\a}(\a)$ follows from the fact that $\I'\circ {I_{\a}}=\I$ noticing that, since the functor ${I_{\a}}$ is right adjoint to the canonical projection functor $\Q^{\rm op} \to \T$, as  is easily verified, it necessarily preserves all existing limits. So Assumption $2$ is satisfied by $\I'$ and $\a'$ if it is by $\I$ and $\a$. 

Concerning Assumption $3$, we observe that, since the functor ${I_{\a}}$ preserves all existing limits in  $\Q^{\rm op}$, it sends in particular monic arrows to monic arrows. So any arrow of the form $\I'(i_{(t, R)}):\I'(t, R)\to \I'(I_{\a}(\a))=\I(\a)$ is a monomorphism in $\S$. It then follows that, given two arrows $\alpha, \beta:S \to \I'(t, R)$ in $\S$, $\alpha=\beta$ if and only of $\I'(i_{(t, R)})\circ \alpha=\I'(i_{(t, R)})\circ \beta$. Supposing that Assumption $3$ holds for $\I$ and $\a$, we want to prove that if $\I'(f)\circ \alpha=\I'(f)\circ \beta$ for every arrow $f:(t, R)\to \a'$ in $\Q^{\rm op}$ then $\alpha=\beta$. Now, we have that $f=i_{I_{\a}(\a)}\circ f= I_{\a}(f)\circ i_{(t, R)}$, whence $\I'(f)=\I'(I_{\a}(f))\circ \I'(i_{(t, R)})=\I(f)\circ \I'(i_{(t, R)})$. So the condition 
\[\I'(f)\circ \alpha=\I'(f)\circ \beta\text{ for every arrow }f:(t, R)\to \a'\text{ in }\Q^{\rm op}\]
 is equivalent to the condition 
 \[\I(f)\circ (\I'(i_{(t, R)}) \circ \alpha)=\I(f)\circ (\I'(i_{(t, R)}) \circ \beta)\text{ for every arrow }f:t \to \a\text{ in }\T,\] 
 which proves our thesis.       	
\end{proof}

\begin{proposition}\label{r:restriction}
	If the functor ${I_{\a}}\colon\T \to {\Q}^{\rm op}$ is \textup{(}full and\textup{)} faithful \textup{(}equivalently, if $\a$ is a coseparator in $\T$\textup{)}, then the adjunction $\C\dashv \V$ of Theorem \ref{thm:weakadj} induced by $\I$ and $\a$ is the restriction of the adjunction $\C'\dashv \V'$, where $\C' \colon \D'\to\Q'^{\rm op}$ and $\V'\colon \Q'^{\rm op}\to\D'$, induced by $\I'$ and $\a'$ along the full embeddings of $\D$ into $\D'$ and of $\Q^{\rm op}$ into $\Q'^{\rm op}$ induced by $I_{\a}$.  
\end{proposition}

\begin{proof}
The functor $I_{\a}$ being full and faithful, we have full embeddings $i:\D \to \D'$ and $j:\Q^{\rm op}\to \Q'^{\rm op}$ sending respectively any object $(t, s)$ of $\D$ to the object $(I_{\a}(t), s)$ of $\D'$ (notice that $s$ can be regarded as a subobject of $\I'(I_{\a}(t))$ since $\I'\circ I_{\a}=\I$) and any object $(t, R)$ of $\Q$ to the object $(I_{\a}(t), R')$ of $\Q'$, where $R'$ is the equivalence relation on the set $\hom_{\Q'^{\rm op}}(I_{\a}(t), \a')$ corresponding to $R$ under the bijection $\hom_{\Q'^{\rm op}}(I_{\a}(t), \a')\cong \hom_{\T}(t, \a)$ induced by $I_{\a}$. The verifications that $\C' \circ i=j\circ \C$, $\V' \circ J=i\circ \V$ and that the adjunction $\C'\dashv \V'$ restricts to the adjunction $\C\dashv \V$ along $i$ and $j$ present no difficulties.     	
\end{proof}

%
\section{Varieties of algebras}\label{sec:algebraic}
We henceforth restrict attention to algebraic categories.  In particular,  we shall work with \emph{varieties of algebras} (i.e., equationally definable classes of algebras) in the sense of S{\l}ominsky \cite{Slominsky:1959} and Linton \cite{Linton:1965}. Notation:%
\begin{itemize}
\item $\Va$ is a variety of algebras, regarded as a category whose objects are the $\Va$-algebras and whose morphisms are the $\Va$-homomorphisms. 
\item $\U\colon \Va \to \Set$ is the 
 underlying-set functor.
\item $\F$ is the  free functor, i.e.\ the left adjoint to $\U$.
\item $A$ is an arbitrary but fixed $\Va$-algebra.
\end{itemize}
We  often speak  of `algebras' and `homomorphisms' (and also `isomorphisms' etc.) rather than `$\Va$-algebras' and `$\Va$-homomorphisms' etc., the variety $\Va$ being understood.

If $I$ is any set, the algebra $\F(I)$ in $\Va$ is, as usual,  \emph{a free algebra generated by $I$}. We fix canonical representatives for the isomorphism class of each free algebra in $\Va$. To this end, let 
\begin{align*}
X_\mu:=\{X_\alpha\}_{\alpha<\mu}
\end{align*}
be a specific set (of ``\emph{variables}'', or ``\emph{free generators}'') of cardinality $\mu$, where $\alpha$ ranges over ordinals (of cardinality) less than $\mu$. We often write $\mu$ as a shorthand for $X_{\mu}$, and therefore $\F(\mu)$
as a shorthand for $\F(X_\mu)$. To stress that we are selecting a specific representative for the  isomorphism class of a free algebra $\F(I)$, we refer to $\F(\mu)$ as \emph{the} free algebra on $\mu$ generators.

The adjunction relation
\begin{align}\label{eq:freeforget}
\frac{\F(\mu)\to A}{\mu\to \U(A)}
\end{align}
shows that $\U(A)$ may be naturally identified (in $\Set$) with the set of homomorphisms $\F(1)\to A$, i.e.\
\begin{align*}
\U{(A)}\cong \hom_\Va{(\F(1),A)}.
\end{align*}
In particular, because $\F$ is a left adjoint and therefore preserves all existing colimits, 
\begin{align}\label{eq:coprod}
\F(\mu)=\coprod_\mu\F(1)
\end{align}
i.e.\ $\F(\mu)$ is the coproduct in $\Va$ of $\mu$ copies of $\F(1)$.

With reference to the notation we adopted in the preceding sections, we now assume:
\begin{itemize}
\item $\T$ is the opposite of the full subcategory of $\Va$ whose objects are the free $\Va$-algebras $\F(\mu)$, as $\mu$ ranges over all cardinals.
\item $\S$ is the category $\Set$.
\item $\a$ is the $\T$-object $\F{(1)}$.
\end{itemize}
It remains to provide an instantiation for the functor $\I\colon \T\to \S$.  To this end notice that any algebra $A$ yields a functor
\begin{align*}
\I_{A}\cong \hom_{\Va}(-, A)\colon \T \to \Set
\end{align*}
that preserves arbitrary products, in the spirit of the Lawvere-Linton functorial semantics of algebraic theories \cite{lawvere63,Linton:1965,pareigis70, gabrielulmer71, manes76, adameketal94, adameketal11}; henceforth we  write simply $\I$ for $\I_{A}$.  We have
\begin{align}\label{eq:defI}
\I\left(\F(\mu)\right)\cong  \U(A)^\mu
\end{align}
for any $\mu$ and, given a homomorphism $\F(\mu)\to\F(\nu)$, the corresponding function $\U(A)^\nu\to\U(A)^\mu$ can be concretely described as follows. First, by (\ref{eq:coprod}), it suffices to consider the case  $\mu=1$. 
Thus, let 
\begin{align}\label{eq:element}
p\colon\F(1)\to\F(\nu)
\end{align}
be given. For an element of $\U(A)^{\nu}$, i.e.\ a function
\begin{align}\label{eq:pointofa}
a_\nu\colon \nu \to \U(A),
\end{align}
by the adjunction (\ref{eq:freeforget}) there is a unique $\Va$-arrow
\begin{align}\label{eq:tuple}
\widehat{a_\nu}\colon\F(\nu)\to A.
\end{align}
We then have the composition
\begin{align}\label{eq:comp}
\F(1)\overset{p}{\longrightarrow}\F(\nu)\overset{\widehat{a_\nu}}{\longrightarrow} A
\end{align}
of (\ref{eq:element}) and (\ref{eq:tuple}). Applying again the adjunction (\ref{eq:freeforget}) to (\ref{eq:comp}) we obtain an arrow in $\Set$
\begin{align*}
\ev(p,a_\nu):= 1 \to \U{(A)},
\end{align*}
i.e.\ an element of $\U({A})$, called the \emph{evaluation of $p$ at $a_\nu$}.
Keeping $p$ fixed and letting $a_{\nu}$ range over all elements (\ref{eq:pointofa}) of $\U{(A)}^{\nu}$, we thus obtain the \emph{evaluation map}
\begin{align}\label{eq:dualeval}
\ev(p,-)\colon \U(A)^\nu\to\U(A).
\end{align}
 We set 
\begin{align}\label{eq:Ionarrows}
\I(p):=\ev(p,-),
\end{align}
and this completes the definition of the functor $\I\colon \T\to\Set$. 
\begin{definition}\label{def:definablemap}A function $\U{(A)}^{\nu}\to \U{(A)}^{\mu}$ is called \emph{definable} \textup{(}\emph{in the language of $\Va$}\textup{)} if it is in the range of $\I$, as defined above. In other words, the definable functions $\U{(A)}^{\nu}\to \U{(A)}^{\mu}$ are precisely those that can be obtained by evaluating a $\mu$-tuple of elements of $\F{(\nu)}$ at the $\nu$-tuples of  elements of $A$.
\end{definition}

Observe that, in the above, $\I$ preserves all products in $\T$ by construction, so it satisfies Assumption \ref{ass:product-preserving}.  Note also that $\Set$ obviously satisfies Assumption \ref{ass:limits}. Concerning Assumption \ref{ass:cosep}, we have: 
\begin{lemma}\label{lem:algcosep}
The object $\a=\F{(1)}$ is an $\I$-coseparator for the functor $\I$ defined in \textup{(\ref{eq:defI}--\ref{eq:Ionarrows})} above. 
\end{lemma}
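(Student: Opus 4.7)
The plan is to unfold Definition \ref{def:coseparator} in the present algebraic setting and then reduce the required joint monicity to the universal property of the product $A^{\mu}$ in $\Set$.

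First I would identify the family in question. A $\T$-arrow $\f\colon \F(\mu)\to \F(1)$ is, by the choice of $\T$ as the opposite of a subcategory of $\Va$, a $\Va$-homomorphism $\F(1)\to \F(\mu)$, and via the free-forgetful adjunction \eqref{eq:freeforget} is naturally identified with an element $p\in\F(\mu)$. Under this identification, the construction \eqref{eq:defI}--\eqref{eq:Ionarrows} gives $\I(\f)=\ev(p,-)\colon A^{\mu}\to A$. Hence the family to be shown jointly monic in $\Set$ is exactly $\{\ev(p,-)\colon A^{\mu}\to A\}_{p\in\F(\mu)}$.

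Next I would single out, among these, the evaluations at the free generators $X_{\alpha}$ listed in \eqref{eq:vars}. A direct inspection of the definition of $\ev$ in \eqref{eq:element}--\eqref{eq:eval} shows that $\ev(X_{\alpha},(a_{\beta})_{\beta<\mu})=a_{\alpha}$, i.e.\ $\ev(X_{\alpha},-)$ is the $\alpha$-th product projection $\pi_{\alpha}\colon A^{\mu}\to A$. So the family $\{\I(\f)\}_{\f}$ contains, in particular, all the projections $\{\pi_{\alpha}\}_{\alpha<\mu}$ of the product $A^{\mu}$ in $\Set$.

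To conclude, I would take arbitrary functions $h_{1},h_{2}\colon S\rightrightarrows A^{\mu}$ with $\I(\f)\circ h_{1}=\I(\f)\circ h_{2}$ for every $\T$-arrow $\f\colon \F(\mu)\to\F(1)$. Specialising to the arrows corresponding to the generators $X_{\alpha}$ yields $\pi_{\alpha}\circ h_{1}=\pi_{\alpha}\circ h_{2}$ for every $\alpha<\mu$, and hence $h_{1}=h_{2}$ by the universal property of the product $A^{\mu}$ in $\Set$, i.e.\ by the fact that the projections form a jointly monic family. There is no real obstacle: the entire content is the observation that the free generators of $\F(\mu)$ evaluate to the product projections, after which joint monicity is automatic.
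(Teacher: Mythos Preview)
Your proposal is correct and follows essentially the same approach as the paper: both arguments observe that the evaluation maps at the free generators $X_{\alpha}$ are precisely the product projections $\pi_{\alpha}\colon A^{\mu}\to A$, and then conclude by the joint monicity of projections. The paper phrases this via the homomorphisms $\iota_{\alpha}\colon \F(1)\to\F(\mu)$ sending $X_{1}\mapsto X_{\alpha}$ and the contrapositive, while you phrase it via $\ev(X_{\alpha},-)$ and the direct implication, but the content is identical.
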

\begin{proof}We need to show that, for any cardinal $\mu$, the family of definable functions $f\colon A^{\mu} \to A$ is jointly monic in $\Set$. That is, given any two functions $h_1,h_2\colon S \to A^{\mu}$, if  $f\circ h_1=f\circ h_2$ for all definable $f$, then $h_1=h_2$. Note that the canonical projection functions $\pi_{\alpha}\colon A^{\mu}\to A$ of the product $A^{\mu}$, for $\alpha<\mu$ an ordinal, are definable. Indeed,  inspection of the definition of $\I$ shows that the unique homomorphism $\iota_{\alpha}\colon \F{(1)}\to\F{(\mu)}$ induced by $X_{1}\mapsto X_{\alpha}$ is such that $\I{(\iota_{\alpha})}=\pi_{\alpha}$. If now $h_{1}\neq h_{2}$, by the universal property of products there exists $\alpha<\mu$ with $\pi_{\alpha}\circ h_{1}\neq \pi_{\alpha}\circ h_{2}$, as was to be shown. 
\end{proof}

For the rest of this paper we follow the standard practice in algebra of omitting the underlying set functor when no confusion can arise. Thus we write, e.g., $a \in A$ in place of $a \in \U{(A)}$. Let us now consider the categories $\D$ and $\Q$ in the present algebraic setting.
Specialising the definitions in Subsection \ref{ss:D}, we see that the $\D$-objects  are all subsets $S\subseteq A^{\mu}$, as $\mu$ ranges over all cardinals. The $\D$-arrows from $S'\subseteq A^{\nu}$ to $S\subseteq A^{\mu}$ are the restrictions  $S' \to S$ of the definable functions  $A^{\nu}\to A^{\mu}$, in the sense of Definition \ref{def:definablemap}. 

Next, we  give an explicit algebraic presentation of the category $\Q$ defined in Subsection \ref{ss:R}. For $\nu$ a cardinal, a \emph{$\Va$-operation \textup{(or, more simply, an} operation\textup{)} of arity $\nu$} is a $\Va$-homomorphism $t\colon \F{(1)}\to\F{(\nu)}$. The operation $t$ is \emph{finitary} if $\nu$ is finite, and \emph{infinitary} otherwise. An \emph{operation on the $\Va$-algebra} $A$ is a function $h\colon A^{\nu}\to A$ that is definable in the sense of Definition \ref{def:definablemap}, that is, such that $h=\I{(t)}:=\ev{(t,-)}$ for some $t\colon \F{(1)}\to\F{(\nu)}$.
\begin{remark}\label{rem:operations}Since homomorphisms $t\colon \F{(1)}\to\F{(\nu)}$ are naturally identified with elements $t \in \F{(\nu)}$ via the adjunction (\ref{eq:freeforget}), the preceding definition  agrees with the usual  notion of  operations as \emph{term-definable functions}; one calls  $t$ a \emph{defining term} for the operation in question. By a classical theorem of  G.\ Birkhoff (see e.g.\ \cite[Theorem 10.10]{Burris:81})  the free algebra $\F{(\nu)}$ can indeed be represented as the algebra of \emph{terms}  ---elements of  absolutely free algebras---  over the set of variables $X_{\nu}$, modulo the equivalence relation that identifies two such terms if, and only if, they evaluate to the same element in any $\Va$-algebra. For the infinitary case see \cite[Ch.\ III]{Slominsky:1959}.
\end{remark}
 Direct inspection of the definitions shows that \emph{homomorphisms commute with  operations}:  given any $\Va$-homomorphism $h\colon A\to B$,  any $\nu$-ary operation $t \in \F{(\nu)}$, and any element $a_{\nu}:=(a_{\beta})_{\beta<\nu}\in A^{\nu}$, we have
	\begin{align}\label{eq:operation}
	h(\ev_{A}{(t,a_{\nu})})=\ev_{B}{(\,t, (h(a_{\beta}))_{\beta < \nu}}\,),
	\end{align}
	where $\ev_{A}(t,-)\colon A^{\nu} \to A$ and $\ev_{B}(t,-)\colon B^{\nu} \to B$ are the evaluation maps with respect to $A$ and $B$. It is common to write (\ref{eq:operation}) as
	\begin{align*}
	h(t(\,a_{\nu}\,))=t(\,(h(a_{\beta}))_{\beta<\nu}\,),
	\end{align*}
	where the algebras $A$ and $B$ over which $t$ is evaluated are tacitly  understood. A \emph{congruence}  $\theta$ on a $\Va$-algebra $A$ is an equivalence relation on $A$ that is \emph{compatible with \textup{(or} preserved by\textup{)} all operations}, i.e.\ with all definable maps $f\colon A^{\nu}\to A$ (i.e. maps of the form $f=\ev{(t,-)}$ for some defining term  $t \in \F{(\nu)}$), where $\nu$ is a cardinal. This means 
	that whenever $x_{\nu}:=(x_{\beta})_{\beta<\nu}$, $y_{\nu}:=(y_{\beta})_{\beta<\nu}$ are $\nu$-tuples of elements of $A$, 
	\begin{align*}
	(x_{\beta},y_{\beta})\in \theta \text{ for each $\beta<\nu$} \ \ \ \Longrightarrow \ \ \ (f(x_{\nu}), f(y_{\nu})) \in \theta.
	\end{align*}

\begin{remark}
	It is a standard fact, even in the infinitary case, that congruences as we just defined them coincide with congruences defined in terms of kernel pairs; see \cite[p.\ 33]{Linton:1965} and \cite[Ch.\ II.5]{Slominsky:1959}.
\end{remark}
\begin{lemma}\label{l:congruence}
	Consider any pair $(\F{(\mu)},R)$, where $R$ is an equivalence relation on
	 $\hom_{\T}(\F{(\mu)}, \a)\cong \U(\F{(\mu)})$. Then $R$ is a $\a$-congruence ---that is, $(\F{(\mu)}, R)$ is an object of $\Q$--- if, and only if, $R$ is a congruence on $\F{(\mu)}$.
\end{lemma}
\begin{proof}
	It suffices to notice that the definable arrows $\U{(\F{(\mu)})}^{\nu}\to \U{(\F{(\mu)})}$ correspond precisely to the arrows of the form $$t\circ -:\hom_{\T}(\F{(\mu)}, \a)^{\nu}\cong \hom_{\T}(\F{(\mu)}, \a^{\nu}) \to \hom_{\T}(\F{(\mu)}, \a)$$ under the natural identification $\U{(\F{(\mu)})}\cong \hom_{\T}(\F{(\mu)}, \a)$.
\end{proof}

We call a pair $(\F{(\mu)},\theta)$, for $\mu$ a cardinal and $\theta$ a congruence on $\F{(\mu)}$, a \emph{presentation} (\emph{in the variety $\Va$}). We call the algebra $\F{(\mu)}/\theta$ the \emph{algebra presented by} $(\F{(\mu)},\theta)$. We write $\Vap$ for the category of \emph{presented $\Va$-algebras}, having as objects all presentations in $\Va$, and as morphisms the $\Va$-homomorphisms between the $\Va$-algebras presented by them. Now $\Vap$ is $\Q$ to within an equivalence:
\begin{theorem}\label{thm:quotienteq}
	Let $\Va$ be any \textup{(}finitary or infinitary\textup{)} variety of algebras, $\Vap$ the associated category of presented $\Va$-algebras. Set $\T$ to be the opposite of the full subcategory of $\Va$ whose objects are the free $\Va$-algebras $\F{(\mu)}$, for $\mu$ an arbitrary cardinal, $\a:=\F{(1)}$, $\I\colon \T \to \Set$ to be the functor defined in Section \ref{sec:algebraic}, and $\Q$ to be the category defined as in Subsection \ref{ss:R}.
	Then, the categories $\Vap$ and $\Q$ are equivalent.
\end{theorem}
\begin{proof}
	Consider the functor that sends an object $(\F{(\mu)},\theta)$ in $\Vap$ into the object $(\F{(\mu)},\theta)$ of $\Q$ and any map $h\colon (\F{(\mu)},\theta)\to (\F{(\nu)},\theta')$ to itself. This functor is well-defined since by Lemma \ref{l:congruence} for any cardinal $\mu$ the congruences on $\F{(\mu)}$ correspond to the $\a$-congruences on $\hom_{\T}(\F{(\mu)}, \a)$  under the bijection $\U{(\F{(\mu)})}\cong \hom_{\T}(\F{(\mu)}, \a)$, and any homomorphism $\F{(\mu)}/\theta \to \F{(\nu)}/\theta'$ is induced by a homomorphism $\F{(\mu)}\to \F{(\nu)}$; moreover, it is clearly fully faithful and essentially surjective, whence it yields an equivalence of categories, as required. 
\end{proof}
\begin{remark}\label{rem:Vpequiv}The category $\Vap$ is equivalent to $\Va$.  Indeed, we have a functor that sends each presented algebra $(\F{(\mu)}, \theta)$ into the quotient $\F{(\mu)}/ \theta$ in $\Va$  and acts identically on maps.  It is elementary that this  functor is full, faithful, and essentially surjective.
\end{remark}
The Galois connections $(\CC,\VV)$ of Lemma \ref{lem:galois} now read as follows. Given a subset $S\subseteq A^{\mu}$, we have
\begin{align}\label{eq:Calg}
\CC{(S)}=\left\{(p,q)\in\F{(\mu)} \mid \forall a\in S: \ \ \ev{(p,a)}=\ev{(q,a)} \right\},
\end{align}
where $\ev(p,-)\colon A^{\mu}\to A$ is, once more, the evaluation map (\ref{eq:dualeval}).

Given a relation $R$ on $\F{(\mu)}$, we have
\begin{align}\label{eq:Valg}
\VV{(R)}=\bigcap_{(p,q)\in R}\left\{a\in\I{(\F{(\mu)})} \mid \ev{(p,a)}=\ev{(q,a)} \right\}.
\end{align}
Lemma \ref{lem:galois} asserts that, for any cardinal $\mu$, any relation $R$ on $\F{(\mu)}$, and any subset $S\subseteq A^{\mu}$, we have
\begin{align*}
R \subseteq \CC{(S)} \quad \quad \text{if, and only if,} \quad \quad S\subseteq \VV{(R)}.
\end{align*}
In other words, the functions $\VV\colon 2^{\F{(\mu)}^{2}}\to 2^{A^{\mu}}$ and $\CC\colon  2^{A^{\mu}}\to2^{\F{(\mu)}^{2}}$ yield a contravariant Galois connection between the indicated power sets.

Consider subsets $S'\subseteq A^{\nu}$, $S\subseteq A^{\mu}$, with $\mu$ and $\nu$ cardinals, and a $\D$-arrow $f\colon S'\subseteq A^{\nu}\to S\subseteq A^{\mu}$, i.e.\    a definable function $f\colon A^{\nu}\to A^{\mu}$ that restricts to a function $S'\to S$. 
Recall from  (\ref{eq:defI}--\ref{eq:Ionarrows}) that  $f$ is induced by a (uniquely determined) homomorphism $h\colon\F{(\mu)}\to \F{(\nu)}$ via evaluation. 
We have
\begin{align*}
\C{(S)}=(\F{(\mu)}, \CC{(S)})
\end{align*}
with $\CC{(S)}$ as in (\ref{eq:Calg}), and similarly for $S'$. The functor $\C$ carries the $\D$-arrow $f$ to the $\Q$-arrow $(\F{(\nu)}, \CC{(S')})\to(\F{(\mu)}, \CC{(S)})$ induced by the homomorphism $h\colon\F{(\mu)}\to \F{(\nu)}$.
Consider, conversely, $\Q$-objects $(\F{(\nu)},R')$ and $(\F{(\mu)}, R)$, together with a $\Q$-arrow $g:(\F{(\nu)},R')\to (\F{(\mu)}, R)$ induced  by a homomorphism, say $h\colon \F{(\mu)}\to\F{(\nu)}$. We have
\begin{align*}
\V{(\F{(\mu)}, R)}=\VV{(R)}\subseteq \I{(\F{(\mu)})}
\end{align*}
with $\VV{(R)}$ as in (\ref{eq:Valg}), and similarly for $(\F{(\nu)},R')$. The functor $\V$ sends $g$ to the restriction $S'\to S$ of the definable function $f\colon A^{\nu}\to A^{\mu}$ induced by $h$. As an immediate consequence of Theorem \ref{thm:weakadj} and Theorem \ref{thm:quotienteq}, we have:
\begin{corollary}[Algebraic affine adjunction]\label{cor:algadj} Consider any \textup{(}finitary or infinitary\textup{)} variety $\Va$ of algebras and its  associated category of presented algebras $\Vap$, and  fix any $\Va$-algebra $A$. Then the functors  $\C\colon  \D \to \Vap^{\rm op}$ and $\V\colon  \Vap^{\rm op}\to \D$ defined as in the above are adjoint with $\C\dashv \V$.\qed
\end{corollary}
\begin{remark}\label{extendedrepresentability}For any $\Va$-algebra $A$, the functor $\I\coloneqq\I_{A}\colon\T \to \Set$ extends to a representable functor on $\Va^{\rm op}$, namely 
\[
\I'\coloneqq\hom_{\Va}(-, A)\colon \Va^{\rm op} \longrightarrow \Set
\]
This $\I'$ can be seen as the extension of $\I$ along the functor ${I_{\a}}\colon \T \to \Q^{\rm op}$ that we looked at in Section \ref{s:concreteness}. (To replace $\Q$ with $\Va$ in the codomain of $I_{\a}$, use Theorem \ref{thm:quotienteq} along with Remark \ref{rem:Vpequiv}.) Then Proposition \ref{p:restriction} and Remark \ref{r:restriction} apply to show that the algebraic adjunction is the restriction of a larger  dual adjunction to which  Proposition \ref{p:repres} applies.
\end{remark}

\subsection{The algebraic \textsl{Nullstellensatz}}\label{s:algnull}
It is well known that in any (finitary or infinitary) variety  $\Va$ of algebras we have:
\begin{enumerate}
\item\label{it:into} The monomorphisms are exactly the injective $\Va$-homomorphisms, which we also call embeddings.
\item\label{it:onto} The regular epimorphisms (=the coequalisers of some pair of parallel arrows) are exactly the surjective  $\Va$-homomorphisms, which we also call quotient maps.
\end{enumerate} 
(See \cite[pp.\ 87--88]{Linton:1965}.) We shall use these basic facts often in this section.

Recall from Section \ref{sec:algebraic} that, for a cardinal $\nu$ and a given element $a\in A^{\nu}$, we have the homomorphism  $\widehat{a}\colon \F{(\nu)}\to A$ of \eqref{eq:tuple}.
Now,  the action of (the underlying function $\U{(\widehat{a})}$ of) $\widehat{a}$ is given by 
\begin{align}\label{eq:action}
p \in \F{(\nu)} \ \overset{\widehat{a}}{\longmapsto} \ \ev{(p,a)} \in A.
\end{align}
For, applying the adjunction  $\F\dashv\U$ to the arrow in \eqref{eq:comp}
\begin{align*}
\F(1)\overset{p}{\longrightarrow}\F(\nu)\overset{\widehat{a}}{\longrightarrow} A
\end{align*}
we obtain the commutative diagram
\begin{small}
\begin{align*}
\begin{tikzpicture}[scale=0.4]
\node (1) at (0,0)   {$1$};
\node (UF) at (7,0) {$\U{(\F{(\nu)})}$};
\node (U) at (14,0) {$\U{(A)}$};
\draw [->] (1) to  node [above, midway] {$\widecheck{p}$} (UF);
\draw [->] (UF) to   node [above, midway] {$\U{(\widehat{a})}$} (U);
\draw [->] (1) to [bend right]  node [below, midway] {$\ev{(p,a)}$} (U);
\end{tikzpicture}
\end{align*}
\end{small}
 where we write  $\widecheck{p}\colon 1 \to \U{(\F{(\nu)})}$ for the unique function corresponding to $p\colon \F{(1)}\to \F{(\nu)}$ under the adjunction.

We also have the natural quotient homomorphism
\begin{align}\label{eq:pointquotient}
q_{a}\colon \F{(\nu)}\twoheadrightarrow \F{(\nu)}/\CC{(\{a\})}.
\end{align}
By construction, $q_{a}$ preserves the relation $\CC{(\{a\})}$ on  $\F{(\nu)}$ with respect to the identity relation on $\F{(\nu)}/\CC{(\{a\})}$.
And $\widehat{a}$ preserves the relation $\CC{(\{a\})}$ on  $\F{(\nu)}$ with respect to the identity relation on $A$. Indeed, if $(p,q) \in \CC{(\{a\})}$ then, by definition, $\ev{(p,a)}=\ev{(q, a)}$, whence $\widehat{a}(p)=\widehat{a}(q)$ by (\ref{eq:action}).  Therefore, by the universal property of the quotient homomorphism there exists exactly one homomorphism 
\begin{align}\label{eq:gelfeval}
\gamma_{a}\colon \F{(\nu)}/\CC{(\{a\})} \longrightarrow A
\end{align} 
that makes the  diagram in Fig.\ \ref{fig:gelfand} commute.
\begin{figure}[h!]
\begin{small}
\smallskip
\begin{center}
\begin{tikzpicture}[scale=0.4]
\node  (F) at (0,5)   {$\F{(\nu)}$};
\node (A) at (0,0) {$A$};
\node (C) at (5,0) {$\frac{\F{(\nu)}}{\CC{(\{a\})}}$};
\draw [->] (F) -- (A) node [left, midway] {$\widehat{a}$};
\draw [->] (F) -- (C) node [right, midway] {$q_{a}$};
\draw [<-] (A) -- (C) node [below, midway] {$\gamma_{a}$} node [above, midway] {$!$};
\end{tikzpicture}
\end{center}
\end{small}
\caption{The Gelfand evaluation $\gamma_a$.}
\label{fig:gelfand}
\end{figure}
\begin{definition}[Gelfand evaluation]\label{def:gelfeval}Given a cardinal $\nu$ and an element $a\in A^{\nu}$,  we call the homomorphism in \textup{(\ref{eq:gelfeval})}  the \emph{Gelfand evaluation} (\emph{of $\F{(\nu)}$ at $a$}).
\end{definition}

\begin{lemma}\label{l:SGK}Fix a cardinal $\nu$.
\begin{enumerate}[\textup{(}i\textup{)}] 
\item\label{l:SGK:item1} For each  $a\in A^{\nu}$, the  Gelfand evaluation $\gamma_{a}$  is a monomorphism, and hence its is an injective function.
\item\label{l:SGK:item2} Conversely, for each congruence relation  $\theta$ on $\F{(\nu)}$, and each homomorphism $e\colon \F{(\nu)}/\theta\to A$,  consider the commutative diagram
\begin{small}
\begin{center}
\begin{tikzpicture}[scale=0.4]
\node (F) at (0,5)   {$\F{(\nu)}$};
\node (A) at (0,0) {$A$};
\node (C) at (5,0) {$\frac{\F{(\nu)}}{\theta}$};
\draw [->] (F) --  (A) node [left, midway] {$e\circ q_{\theta}$};
\draw [->] (F) --   (C) node [right, midway] {$q_\theta$};
\draw [<-] (A) --  (C) node [below, midway] {$e$};
\end{tikzpicture}
\end{center}
\end{small}
where $q_{\theta}$ is the natural quotient homomorphism. Set $a:=(e\circ q_\theta(X_\beta))_{\beta<\nu}\in A^\nu$. If $e$ is a monomorphism, then $\theta = \CC{(\{a\})}$, and the commutative diagram above coincides with the one in Fig.\ \ref{fig:gelfand}. \textup{(}That is, $q_\theta=q_a$, $e=\gamma_{a}$, and $e\circ q_\theta=\widehat{a}$.\textup{)}
\end{enumerate}
\end{lemma}
\begin{proof}
\noindent$(i)$\ It suffices to check that  $\gamma_{a}$ is injective. Pick $p,q \in \F{(\nu)}$ such that $(p,q)\not \in\CC{(\{a\})}$. Then, by definition, $\ev{(p,a)}\neq\ev{(q, a)}$, and therefore $\widehat{a}(p)\neq \widehat{a}(q)$ by (\ref{eq:action}). But then, by the definition of Gelfand evaluation,  it follows that $\gamma_{a}(p)\neq \gamma_{a}(q)$. 

\noindent$(ii)$\ Since $e$ is monic, we have $\ker{(e\circ q_\theta)}=\ker{q_\theta}=\theta$. Explicitly, 
\begin{align}\label{eq:kernel}
\forall s,t \in \F{(\nu)}: \ \ (s,t)\in\theta \ \  \Longleftrightarrow \ \ e(q_\theta(s))=e(q_\theta(t)).
\end{align}
By  the  definition of $a$, since homomorphisms commute with operations,  (\ref{eq:kernel}) yields
\begin{align}\label{eq:kerneltuple}
\forall s,t \in \F{(\nu)}: \ \ (s,t)\in\theta \ \ \Longleftrightarrow \ \ \ev{(s,a)}=\ev{(t,a)}.
\end{align}
Therefore, by (\ref{eq:kerneltuple}), we have $a \in \VV{(\theta)}$. By the Galois connection (\ref{eq:galois}) this is equivalent to $\theta\subseteq \CC{(\{a\})}$.

For the converse inclusion, if $(u,v)\in \CC{(\{a\})}$, then $\ev{(u,a)}=\ev{(v,a)}$, and therefore $(u,v)\in\theta$ by (\ref{eq:kerneltuple}). This proves $\theta=\CC{(\{a\})}$, and therefore $q_\theta=q_a$. To show $\widehat{a}=e\circ q_a$, note that, by the definition of $\widehat{a}$ and the universal property of $\F{(\nu})$, they both are the (unique) extension of the function $X_\beta \mapsto \ev{(X_\beta,(e\circ q_\theta(X_\beta)))}$, for $\beta < \nu$. 
\end{proof}
For a congruence relation $\theta$ on $\F{(\nu)}$, we now consider the natural quotient homomorphism
\begin{align}\label{eq:quotth}
q_{\theta}\colon \F{(\nu)} \to \F{(\nu)}/\theta,
\end{align}
together with the product $\prod_{a\in \VV{(\theta)}}\F{(\nu)}/\CC{(\{a\})}$ and its projections
\begin{align}\label{eq:prodsheaf}
\pi_{a}\colon \prod_{a\in \VV{(\theta)}}\frac{\F{(\nu)}}{\CC{(\{a\})}}\xrightarrow{\hskip .7cm }\frac{\F{(\nu)}}{\CC{(\{a\})}}.
\end{align}
We also consider the power $A^{\VV{(\theta)}}$ and its projections
\begin{align}\label{eq:prodfunc}
p_{a}\colon A^{\VV{(\theta)}}\longrightarrow A.
\end{align}
The morphisms (\ref{eq:gelfeval}--\ref{eq:prodfunc}) yield the commutative diagrams  ---one for each $a\in\VV{(\theta)}$---  in Fig.\ \ref{fig:birkgelf},
\begin{figure}[h!]
\centering
\smallskip
\begin{tikzpicture}[scale=0.75]
\node (Fth) at (0,0)   {$\F{(\nu)}/\theta$};
\node (P1) at (5,0) {$\prod_{a\in \VV{(\theta)}}\frac{\F{(\nu)}}{\CC{(\{a\})}}$};
\node (P2) at (10,0) {$A^{\VV{(\theta)}}$};
\node (FC) at (0,-3) {$\F{(\nu)}/\CC{(\{a\})}$};
\node (A) at (5,-3) {$A$};
\draw [->] (Fth) to node [left, midway] {$q$} (FC);
\draw [->] (P1) to node [right, midway] {$\gamma_{a}\circ\pi_{a}$} (A);
\draw [->] (FC) to node [below, midway] {$\gamma_{a}$} (A);
\draw [->] (P2) to node [below, midway,yshift=-0.15cm] {$p_{a}$} (A);
\draw [->] (P1) to node [below, midway,yshift=-0.15cm] {$\pi_{a}$} (FC);
\draw [->] (Fth) to node [above, midway] {$\sigma_{\theta}$} node [below, midway] {$!$} (P1) ;
\draw [->] (P1) to  node [above, midway] {$\iota_{\theta}$} node [below, midway] {$!$} (P2) ;
\draw [->] (Fth) to [bend left] node [above, midway] {$\gamma_{\theta}:=\iota_{\theta}\circ\sigma_{\theta}$} node [below, midway] {$!$} (P2);
\end{tikzpicture}
\caption{The Gelfand and Birkhoff transforms  $\gamma_\theta$ and $\sigma_\theta$.}
\label{fig:birkgelf}
\end{figure}
where $\sigma_{\theta}$ and $\iota_{\theta}$ are the unique homomorphisms whose existence is granted by the universal property of the products $\prod_{a\in \VV{(\theta)}}\frac{\F{(\nu)}}{\CC{(\{a\})}}$ and $A^{\VV{(\theta)}}$, respectively.
\begin{definition}[Gelfand and Birkhoff transforms]\label{eq:gelftrans}Given a cardinal $\nu$ and a congruence $\theta$ on $\F{(\nu)}$, the homomorphisms $\gamma_{\theta}:=\iota_{\theta}\circ\sigma_{\theta}$ and $\sigma_{\theta}$ given by the commutative diagram above are called the \emph{Gelfand} and the \emph{Birkhoff transforms} (\emph{of $\F{(\nu)}/\theta$ with respect to $A$}), respectively.\end{definition}
\begin{lemma}\label{lem:easygelf}With the  notation above, and for each $a \in A$, the homomorphisms $\pi_{a}\circ \sigma_{\theta}$ and $\iota_{\theta}$ are surjective and injective, respectively.
\end{lemma}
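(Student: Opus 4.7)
The plan is to read off both claims directly from the defining commutative diagram in Figure \ref{fig:birkgelf}, using Lemma \ref{l:SGK}(i) and elementary properties of products.

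For the surjectivity of $\pi_{a}\circ \sigma_{\theta}$, I would first precompose with the quotient map $q_{\theta}\colon\F{(\nu)}\twoheadrightarrow \F{(\nu)}/\theta$. Since $a\in \VV{(\theta)}$, the Galois connection \eqref{eq:galois} gives $\theta\subseteq \CC{(\{a\})}$, so $q_{a}$ factors through $q_{\theta}$ as $q_{a}=\pi_{a}\circ \sigma_{\theta}\circ q_{\theta}$ (this is exactly the commutativity of the left half of Figure \ref{fig:birkgelf}). Because $q_{a}$ is surjective (it is a quotient map of $\Va$-algebras, and by Remark \ref{rem:ontointo} regular epimorphisms in $\Va$ are surjective), and because a composite that is surjective forces its last factor to be surjective, we conclude that $\pi_{a}\circ \sigma_{\theta}$ is surjective.

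For the injectivity of $\iota_{\theta}$, by Remark \ref{rem:ontointo} it suffices to show that $\iota_{\theta}$ is a monomorphism. By construction, $\iota_{\theta}$ is the unique homomorphism from the product $\prod_{a\in \VV{(\theta)}}\F{(\nu)}/\CC{(\{a\})}$ into the product $A^{\VV{(\theta)}}$ satisfying $p_{a}\circ \iota_{\theta}=\gamma_{a}\circ \pi_{a}$ for every $a\in \VV{(\theta)}$. Concretely, $\iota_{\theta}$ acts componentwise as the tuple of Gelfand evaluations $(\gamma_{a})_{a\in \VV{(\theta)}}$. By part (i) of the Stone-Gelfand-Kolmogorov Lemma (Lemma \ref{l:SGK}), each $\gamma_{a}$ is injective; a componentwise product of injective maps between set-theoretic products is injective, so $\iota_{\theta}$ is injective on underlying sets and hence a monomorphism.

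There is essentially no obstacle here: both parts are immediate applications of standard facts, once Lemma \ref{l:SGK}(i) is in hand. The only point that deserves a moment's care is making sure that the factorisation $q_{a}=\pi_{a}\circ \sigma_{\theta}\circ q_{\theta}$ is indeed the one encoded in Figure \ref{fig:birkgelf}, which is guaranteed by the universal property of the product defining $\sigma_{\theta}$ together with the commutativity of the triangle $\gamma_{a}\circ q_{a}=\widehat{a}$ from Figure \ref{fig:gelfand}.
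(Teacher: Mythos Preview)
Your proof is correct and matches the paper's approach: for injectivity of $\iota_{\theta}$ both you and the paper use that each $\gamma_{a}$ is monic (Lemma~\ref{l:SGK}(i)) together with the universal property of the product, and for surjectivity of $\pi_{a}\circ\sigma_{\theta}$ the paper simply observes that this composite \emph{equals} the quotient map $q\colon\F{(\nu)}/\theta\to\F{(\nu)}/\CC{(\{a\})}$ from Figure~\ref{fig:birkgelf}, whereas you reach the same conclusion after one extra step of precomposing with $q_{\theta}$. The arguments are essentially identical.
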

\begin{proof}It is clear that $\pi_{a}\circ \sigma_{\theta}$ is onto, because $q\colon \F{(\nu)}/\theta \to \F{(\nu)}/\CC{(\{a\})}$ is onto. Concerning 
$\iota_{\theta}$, let $x,y \in \prod_{a \in \VV{(a)}}\F{(\nu)}/\CC{(\{a\})}$, and suppose $\iota_{\theta}(x)=\iota_{\theta}(y)$.  With reference to the commutative diagram in Fig.\ {\ref{fig:birkgelf}},  for each $a \in \VV{(\theta)}$ we have $p_a(\iota_\theta(x))=p_a(\iota_\theta(y))$, and therefore $\gamma_a(\pi_a(x))= \gamma_a(\pi_a(y))$. Since $\gamma_{a}
$ is a monomorphism for each $a$ by Lemma \ref{lem:easygelf}, we infer $\pi_a(x)=\pi_a(y)$ for each $a$, and hence $x=y$ by the universal property of the product $\prod_{a 
\in \VV{(a)}}\F{(\nu)}/\CC{(\{a\})}$.
\end{proof}
\begin{definition}[Radical]\label{def:radical}For a cardinal $\nu$ and a relation $R$ on $\F{(\nu)}$, we call the congruence
\[
\bigcap_{a \in \VV{(R)}} \CC{(\{a\})}
\]
the \emph{radical  of $R$ \textup{(}with respect to the $\Va$-algebra $A$\textup{)}}. A congruence $\theta$ on $\F{(\nu)}$ is \emph{radical \textup{(}with respect to $A$\textup{)}} if $\theta=\bigcap_{a \in \VV{(\theta)}} \CC{(\{a\})}$.
\end{definition}
Note that the inclusion
\begin{align*}
\theta \subseteq \bigcap_{a \in \VV{(\theta)}} \CC{(\{a\})},
\end{align*}
always holds, cf.\ \eqref{eq:contained2}.

\begin{theorem}[Algebraic \textsl{Nullstellensatz}]\label{thm:algnull}
For any $\Va$-algebra $A$, any cardinal $\nu$, and any congruence $\theta$ on $\F{(\nu)}$. The following are equivalent.
\begin{enumerate}[\textup{(}i\textup{)}]
\item\label{l:null-subdirect-item1} $\CC{(\VV{(\theta)})}=\theta$.
\item\label{l:null-subdirect-item2} $\theta=\bigcap_{a \in \VV{(\theta)}} \CC{(\{a\})}$, i.e.\ $\theta$ is a radical congruence with respect to $A$.
\item\label{l:null-subdirect-item3} The Birkhoff transform $\sigma_{\theta}\colon\frac{\F{(\nu)}}{\theta}\longrightarrow\prod_{a\in \VV{(\theta)}}\frac{\F{(\nu)}}{\CC{(\{a\})}}$ is a subdirect embedding. 
\end{enumerate}
\end{theorem}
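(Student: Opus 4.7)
My plan is to prove the equivalences in two steps: (i)$\Leftrightarrow$(ii) by specialising the abstract weak \emph{Nullstellensatz} of Theorem \ref{thm:null-in-set} to the present algebraic setting; and (ii)$\Leftrightarrow$(iii) by a direct analysis of the kernel of the Birkhoff transform, using Lemma \ref{lem:easygelf} to take care of the surjectivity-of-projections half of the subdirect-embedding condition.

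\textbf{Step 1: (i)$\Leftrightarrow$(ii).} Here $\S=\Set$ and $\I(\F(\nu))=A^{\nu}$, so the subobjects $*\to A^{\nu}$ factoring through $\VV(\theta)$ are exactly the singleton inclusions $\{a\}\hookrightarrow A^{\nu}$ with $a\in\VV(\theta)$. A direct comparison of the definitions in (\ref{eq:Calg}) and Definition \ref{def:radical} shows that, for such a singleton subobject, the congruence $\CC(\{a\})$ of the abstract framework coincides with the congruence used in the statement of (ii). Applying Theorem \ref{thm:null-in-set} with $R:=\theta$ therefore yields the equivalence
\[
\theta=\CC(\VV(\theta))\ \Longleftrightarrow\ \theta=\bigcap_{a\in\VV(\theta)}\CC(\{a\}),
\]
which is precisely (i)$\Leftrightarrow$(ii).

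\textbf{Step 2: (ii)$\Leftrightarrow$(iii).} By Remark \ref{rem:ontointo}, a subdirect embedding is an injective homomorphism all of whose projections (composed with it) are surjective. Lemma \ref{lem:easygelf} already guarantees that $\pi_{a}\circ\sigma_{\theta}$ is surjective for every $a\in\VV(\theta)$, so it only remains to identify when $\sigma_{\theta}$ itself is injective. By construction, $\sigma_{\theta}$ is the unique homomorphism such that $\pi_{a}\circ\sigma_{\theta}\circ q_{\theta}=q_{a}$ for each $a\in\VV(\theta)$, so for any $p,q\in\F(\nu)$ we have
\[
\sigma_{\theta}(p/\theta)=\sigma_{\theta}(q/\theta)\ \Longleftrightarrow\ \forall\,a\in\VV(\theta):\ (p,q)\in\CC(\{a\})\ \Longleftrightarrow\ (p,q)\in\!\!\bigcap_{a\in\VV(\theta)}\!\!\CC(\{a\}).
\]
Since $q_{\theta}$ is surjective, $\sigma_{\theta}$ is injective precisely when the congruence on the right coincides with $\theta$; combined with the trivial inclusion (\ref{eq:galoisincl}), this is equivalent to (ii).

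\textbf{Main obstacle.} There is no essential technical obstacle here: the heavy lifting has been done in Theorem \ref{thm:null-in-set} and Lemma \ref{lem:easygelf}. The only points that require some care are translating the abstract jointly-epic hypothesis in Theorem \ref{thm:null-in-set} into the concrete statement that singletons cover $\VV(\theta)$, and correctly unfolding the universal property of $\sigma_{\theta}$ to express its kernel in terms of $\bigcap_{a\in\VV(\theta)}\CC(\{a\})$.
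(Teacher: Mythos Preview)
Your proof is correct. Step~1 is essentially identical to the paper's argument: both invoke the abstract weak \emph{Nullstellensatz} in its $\Set$-specialisation (Theorem~\ref{thm:null-in-set}, equivalently Theorem~\ref{thm:null} with singleton subobjects as the jointly epic family).

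Step~2, however, differs from the paper's treatment. The paper proves (ii)$\Leftrightarrow$(iii) by reducing to three standard universal-algebraic facts from Burris--Sankappanavar: the characterisation of when a natural map into a product of quotients is injective (their Theorem~7.15), the Second Isomorphism Theorem (6.15), and the Correspondence Theorem for congruence lattices (6.20). Your argument is more direct: you simply compute the kernel of $\sigma_\theta$ elementwise as $\bigcap_{a\in\VV(\theta)}\CC(\{a\})$ and read off the injectivity condition, with Lemma~\ref{lem:easygelf} handling the subdirect part. Your route is shorter and more self-contained, and in particular does not require passing through the quotient congruences $\CC(\{a\})/\theta$ on $\F(\nu)/\theta$; the paper's route has the advantage of making the connection to the classical subdirect-representation machinery explicit, which matters for the surrounding narrative about Birkhoff's theorem.
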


\begin{remark}In the proof that follows we apply three standard results in universal algebra, namely, \cite[Theorems 7.15, 6.15, and 6.20]{Burris:81}. Although in \cite{Burris:81} these  results are stated and proved for finitary varieties, the same proofs work for infinitary ones.
\end{remark}
\begin{proof}
\noindent The hypotheses of Theorem \ref{thm:null} are satisfied: the terminal object in $\Set$ is a singleton $\{a\}$, and the family of functions $\{a\}\to \VV{(R)}$ ---i.e.\ the elements of $\VV{(R)}$--- is obviously jointly epic. This proves the equivalence of  (\ref{l:null-subdirect-item1}) and (\ref{l:null-subdirect-item2}).

\noindent $(\ref{l:null-subdirect-item2})\Leftrightarrow (\ref{l:null-subdirect-item3})$.  By  \cite[Theorem 7.15]{Burris:81}, given any algebra $B$ and a family $\{\theta_{i}\}_{i\in I}$ of congruences  on $B$, the natural homomorphism $h\colon B\to\prod_{i\in I}B/\theta_{i}$ induced by the quotient homomorphisms $q_{\theta_{i}}\colon B\to B/{\theta_{i}}$ is an embedding if, and only if, $\bigcap_{i\in I}\theta_{i}$ is the identity congruence $\Delta$ on $B$. Taking 
\[B:=\F{(\nu)}/\theta\  \text{ and } \  \{\theta_{i}\}:=\{\CC{(\{a\})}\}_{a \in \VV{(\theta)}},\] we  obtain  the natural homomorphism 
\begin{align}\label{eq:nath}
h\colon \F{(\nu)}/\theta\longrightarrow  \prod_{a\in \VV{(\theta)}}\frac{\F{(\nu)}/\theta}{\CC{(\{a\})}/\theta},
\end{align}
where $\CC{(\{a\})}/\theta$ denotes the set $\{(p/\theta,q/\theta)\in \F{(\nu)}/\theta\mid (p,q) \in \CC{(\{a\})}\}$, which is easily seen to be a congruence relation on $\F{(\nu)/\theta}$.
It is clear by construction  that if $h$ is an embedding, then it is subdirect. Hence we have:
\begin{align}\label{eq:subdirect}
h\text{ is a subdirect embedding} \ \Longleftrightarrow\ \bigcap_{a\in \VV{(\theta)}}\CC{(\{a\})}/\theta=\Delta/\theta\
\end{align}
For each $a\in \VV{(\theta)}$, by the Galois connection \eqref{eq:galois}   we have $\theta\seq \CC{(\{a\})}$. Therefore, by the second isomorphism theorem \cite[Theorem 6.15]{Burris:81},
\begin{align}\label{eq:secondiso}
\forall a \in \VV{(a)}: \ \ \frac{\F{(\nu)}/\theta}{\CC{(\{a\})}/\theta}\cong\F{(\nu)}/\CC{(\{a\})}. 
\end{align}
From (\ref{eq:subdirect}--\ref{eq:secondiso}) we see:
\begin{align*}
h \text{ is a subdirect embedding} \ \ \Longleftrightarrow \ \ \sigma_{\theta} \text{ is a subdirect embedding.}
\end{align*}
Finally, upon recalling that, by \cite[Theorem 6.20]{Burris:81}, the mapping $\theta'\mapsto \theta'/\theta$ is an isomorphism of lattices between the lattice of congruences of $\F{(\nu)}$ extending $\theta$ and the lattice of congruences of $\F{(\nu)}/\theta$, we have 
\begin{align}\label{eq:lattcong}
\bigcap_{a\in \VV{(\theta)}}\CC{(\{a\})}/\theta=\Delta/\theta \ \ \Longleftrightarrow \ \ \bigcap_{a\in\VV{(\theta)}}\CC{(\{a\})}=\theta. 
\end{align}
In conclusion, (\ref{eq:nath}--\ref{eq:lattcong}) amount to the equivalence between $(\ref{l:null-subdirect-item2})$ and $(\ref{l:null-subdirect-item3})$.
\end{proof}

\begin{remark}\label{r:finitary-vs-inifinitary}
Since Birkhoff's influential paper  \cite{birkhoff1944subdirect} the theory of  algebras definable by operations of finite arity only has been  developed intensively. In  \cite[Theorem 1]{birkhoff1944subdirect} Birkhoff pointed out, by way of motivation for his main result, that the Lasker-Noether theorem \cite{noether21} generalises easily to algebras whose congruences satisfy the ascending chain condition, even in the presence of operations of infinite arity. His main result \cite[Theorem 2]{birkhoff1944subdirect} then showed how to extend the Lasker-Noether theorem to any variety of algebras, without any chain condition, provided however that all operations be finitary. In short, Birkhoff's subdirect representation theorem (see e.g.\ \cite[Theorem 2.6]{jacobson80} for a textbook treatment) fails for infinitary varieties of algebras. Much of the remaining general theory, however, carries over to the infinitary case. The two classical references on infinitary varieties are \cite{Slominsky:1959, Linton:1965}. Linton's paper \cite{Linton:1965}, in particular, extended Lawvere's categorical treatment of universal algebra \cite{lawvere63, lawverereprint}.
\end{remark}
Closing a circle of ideas, let us indicate how to obtain a ring-theoretic {\it Nullstellensatz} from Theorem \ref{thm:algnull}. Continuing the notation in the Introduction, we consider an algebraically closed field $k$, and finitely many variables $X:=\{X_{1},\ldots,X_{n}\}$, $n\geq 0$ an integer. The  $k$-algebra freely generated by $X$ is 
the polynomial ring $k[X]$. Congruences on any $k$-algebra are in one-one inclusion-preserving correspondence with ideals. We  let $\Va$ be the variety of $k$-algebras,  and we let  $A:=k$. The details then depend on what definition one takes for the notion of radical ideal. We shall use:
\begin{definition}\label{def:radideal}An ideal of a $k$-algebra is \emph{radical} if, and only if, it is an intersection of maximal ideals.
\end{definition}
We use  a classical result from commutative algebra; see e.g.\ \cite{atyiahmacdonald}.

\begin{lemma}[Zariski's Lemma]\label{lem:zariski} Let $F$ be any field, and suppose $E$ is a finitely generated $F$-algebra that is itself a field. Then $E$ is a finite field extension of $F$.\qed
\end{lemma}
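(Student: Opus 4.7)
The plan is to argue by induction on the number $n$ of generators of $E$ as an $F$-algebra, writing $E = F[a_{1}, \ldots, a_{n}]$.

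For the base case $n=1$, if $a_{1}$ were transcendental over $F$, then $F[a_{1}]$ would be isomorphic to the polynomial ring $F[X]$, which is not a field (the indeterminate has no inverse). Hence $a_{1}$ must be algebraic over $F$, and $E=F[a_{1}]=F(a_{1})$ is a finite extension.

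For the inductive step, I would pass to the intermediate field $F(a_{1})$ and observe that $E = F(a_{1})[a_{2}, \ldots, a_{n}]$ is a field generated by $n-1$ elements as an $F(a_{1})$-algebra. The inductive hypothesis then yields that $E$ is a finite extension of $F(a_{1})$, and in particular each $a_{i}$ with $i\geq 2$ is algebraic over $F(a_{1})$. The remaining task is to show $a_{1}$ is algebraic over $F$; I argue by contradiction, assuming $a_{1}$ is transcendental. Clearing denominators in the minimal polynomials of $a_{2}, \ldots, a_{n}$ over $F(a_{1})$, there is a nonzero $d\in F[a_{1}]$ such that each $a_{i}$ ($i\geq 2$) is integral over the localisation $R := F[a_{1}][d^{-1}]$. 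Consequently $E$ is integral over $R$.

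Now I would invoke the standard fact that if a domain $R$ is contained in a field $E$ and $E$ is integral over $R$, then $R$ itself is a field: given $0\neq r \in R$, the element $r^{-1}\in E$ satisfies an integral equation $r^{-n}+c_{n-1}r^{-(n-1)}+\cdots+c_{0}=0$ with $c_{j}\in R$, and multiplying by $r^{n-1}$ expresses $r^{-1}$ as an element of $R$. Thus $R=F[a_{1}][d^{-1}]$ would be a field, isomorphic to the localisation $F[X][d(X)^{-1}]$ of the polynomial ring. But $F[X]$ is a principal ideal domain with infinitely many pairwise non-associate irreducibles (Euclid's argument: from finitely many irreducibles $p_{1}, \ldots, p_{k}$ one forms $p_{1}\cdots p_{k}+1$, which has none of them as a factor), so I can choose an irreducible $p(X)\in F[X]$ not dividing any power of $d(X)$. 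Then $p(a_{1})$ is a nonzero non-unit of $R$, contradicting that $R$ is a field. This contradiction forces $a_{1}$ algebraic over $F$, and combining with the tower $F \subseteq F(a_{1}) \subseteq E$ the extension $E/F$ is finite, as required.

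The main obstacle is the final step: extracting a genuine non-unit in the localisation $F[a_{1}][d^{-1}]$. Everything else is a clean tower/integrality reduction, but this last contradiction depends on the arithmetic of $F[X]$ (infinitude of irreducibles and unique factorisation), which is the real content distinguishing Zariski's lemma from the easy base case.
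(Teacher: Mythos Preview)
Your argument is correct and is one of the standard proofs of Zariski's Lemma. Note, however, that the paper does not actually prove this statement: the lemma is stated with a \qed\ symbol and introduced as ``a classical result from commutative algebra'' with a reference to Atiyah--Macdonald, so there is no proof in the paper to compare against. Your induction-on-generators argument, reducing to the infinitude of irreducibles in $F[X]$ to contradict that the localisation $F[a_{1}][d^{-1}]$ could be a field, is precisely one of the textbook routes (and is close in spirit to the treatment in the cited reference). One small phrasing point: ``an irreducible $p(X)$ not dividing any power of $d(X)$'' is equivalent, since $p$ is prime, to $p\nmid d$; you might as well say the latter directly.
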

The conjunction of   Lemma \ref{l:SGK} with Zariski's Lemma entails the following  fact, which is the key result in the ring-theoretic setting:
\begin{lemma}[Lemma of Stone-Gelfand-Kolmogorov type]\label{l:SGK-for-rings}An ideal $I$ of $k[X]$ is maximal if, and only if, there exists $a \in k^{n}$ such that $I=\CC{(\{a\})}$.
\end{lemma}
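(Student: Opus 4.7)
The plan is to derive both directions from the Stone-Gelfand-Kolmogorov lemma (Lemma \ref{l:SGK}), translated to the ring-theoretic language via the standard bijection between congruences on a $k$-algebra and its ideals.

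For the forward direction, suppose $I = \CC(\{a\})$ for some $a = (a_1,\dots,a_n) \in k^n$. Unwinding the definition \eqref{eq:Calg} with $A = k$ and $\mu = n$, the ideal $\CC(\{a\})$ is precisely the kernel of the evaluation homomorphism $\widehat{a}\colon k[X]\to k$, $p \mapsto p(a)$. Since $\widehat{a}$ is surjective (it is the identity on constants) and $k$ is a field, $k[X]/I \cong k$ is a field, hence $I$ is maximal.

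For the reverse direction, assume $I$ is maximal, so that $E := k[X]/I$ is a field. Since $E$ is generated as a $k$-algebra by the $n$ residues $X_1/I,\dots,X_n/I$, Zariski's Lemma \ref{lem:zariski} forces $E$ to be a finite algebraic extension of $k$. The algebraic closure of $k$ then gives $E = k$, so the quotient map $q_I\colon k[X]\twoheadrightarrow k[X]/I$ can be identified with a $k$-algebra homomorphism whose codomain is $k$ itself. Equivalently, setting $e$ to be the isomorphism $k[X]/I \xrightarrow{\sim} k$, we obtain a monomorphism $e\colon k[X]/I \to k$ fitting into the commutative triangle of Lemma \ref{l:SGK}(ii) (with $\nu = n$, $\theta = I$).

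The final step is to invoke Lemma \ref{l:SGK}(ii) directly: defining $a := (e(q_I(X_i)))_{i=1}^{n} \in k^n$, the lemma yields $I = \CC(\{a\})$, which completes the proof. The only non-routine ingredient is Zariski's Lemma, which is already stated as Lemma \ref{lem:zariski}; everything else is a direct translation of the general algebraic Stone-Gelfand-Kolmogorov result into the present ring-theoretic setting, using the standard fact that ideals of $k[X]$ correspond bijectively to $k$-algebra congruences.
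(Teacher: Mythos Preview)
Your proposal is correct and follows essentially the same approach as the paper. The only cosmetic difference is that for the implication $I=\CC(\{a\})\Rightarrow I$ maximal, the paper invokes Lemma~\ref{l:SGK}(i) to see that the Gelfand evaluation $\gamma_a$ is an embedding (and then observes it is onto), whereas you argue directly that the evaluation $\widehat{a}$ is surjective with kernel $\CC(\{a\})$; both routes yield $k[X]/\CC(\{a\})\cong k$ and hence maximality, and the reverse implication via Zariski's Lemma and Lemma~\ref{l:SGK}(ii) is identical.
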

\begin{proof}
Assume $I=\CC{(\{a\})}$, and consider the Gelfand evaluation $\gamma_{a}\colon k[X]/\CC{(\{a\})}$ $\to k$ of Definition \ref{def:gelfeval}. By Lemma \ref{l:SGK}, $\gamma_{a}$ is an embedding. From the fact that $\gamma_{a}$ is a homomorphism of $k$-algebras it follows at once that it is onto $k$, and hence an isomorphism. Moreover $k$, being a field, is evidently simple in the universal-algebraic sense, i.e.\ it has no non-trivial ideals. Hence $\CC{(\{a\})}$, the kernel of the homomorphism $q_{a}\colon k[X]\to k[X]/I$ as in (\ref{eq:pointquotient}), is maximal (by direct inspection, or using the more general \cite[Theorem 6.20]{Burris:81}).

Conversely, assume that $I$ is maximal, and consider the natural quotient map $q_{I}\colon k[X]\to k[X]/I$. Then $k[X]/I$ is a simple finitely generated $k$-algebra, and hence a field. By Zariski's Lemma \ref{lem:zariski}, $k[X]/I$ is a finite field extension of $k$; since $k$ is algebraically closed, $k$ and $k[X]/I$ are isomorphic. Applying  Lemma \ref{l:SGK} with $e\colon k[X]/I\to k$ the preceding isomorphism completes the proof.
\end{proof}
\begin{corollary}[Ring-theoretic {\it Nullstellensatz}]\label{c:ringnull}For any ideal $I$ of $k[X]$, the following are equivalent.
\begin{itemize}
\item[\textup{(}i\textup{)}] $\CC{(\VV{(I)})}=I$.
\item[\textup{(}ii\textup{)}] $I$ is radical.
\end{itemize}
\end{corollary}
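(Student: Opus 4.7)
The plan is to derive the corollary directly from the Algebraic \emph{Nullstellensatz} (Theorem \ref{thm:algnull}) by using the ring-theoretic Stone-Gelfand-Kolmogorov Lemma \ref{l:SGK-for-rings} to identify the ``point-ideals'' $\CC{(\{a\})}$, $a \in k^n$, with the maximal ideals of $k[X]$. Once this identification is in place, the equivalence between ``radical'' in the sense of Definition \ref{def:radical} (intersection of ideals of the form $\CC{(\{a\})}$, $a \in \VV{(I)}$) and ``radical'' in the sense of Definition \ref{def:radideal} (intersection of maximal ideals) will fall out immediately from the Galois connection of Lemma \ref{lem:galois}.

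First I would apply Theorem \ref{thm:algnull} in the present instance: it tells us that $\CC{(\VV{(I)})}=I$ holds if, and only if, $I=\bigcap_{a\in\VV{(I)}}\CC{(\{a\})}$. Then, by Lemma \ref{l:SGK-for-rings}, the family $\{\CC{(\{a\})} \mid a\in k^n\}$ coincides exactly with the family of maximal ideals of $k[X]$. Therefore, the condition $I=\bigcap_{a\in\VV{(I)}}\CC{(\{a\})}$ at once implies that $I$ is an intersection of maximal ideals, hence radical in the sense of Definition \ref{def:radideal}.

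For the converse, assume $I$ is radical, so $I=\bigcap_{j\in J}M_j$ for some family of maximal ideals. By Lemma \ref{l:SGK-for-rings}, each $M_j=\CC{(\{a_j\})}$ for some $a_j\in k^n$. Since $I\subseteq \CC{(\{a_j\})}$, the Galois connection (Lemma \ref{lem:galois}, or equivalently (\ref{eq:galois})) gives $a_j\in \VV{(I)}$. Hence
\[
I=\bigcap_{j\in J}\CC{(\{a_j\})}\supseteq \bigcap_{a\in \VV{(I)}}\CC{(\{a\})}\supseteq I,
\]
where the last inclusion is the general expansiveness (\ref{eq:galoisincl}). This shows $I=\bigcap_{a\in\VV{(I)}}\CC{(\{a\})}$, and invoking Theorem \ref{thm:algnull} once more yields $\CC{(\VV{(I)})}=I$.

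There is essentially no obstacle to overcome: all the real content has already been extracted in Lemma \ref{l:SGK-for-rings} (which in turn relied on Zariski's Lemma to import the hypothesis that $k$ is algebraically closed) and in the general Algebraic \emph{Nullstellensatz}. The only subtle point is reconciling the two a priori distinct notions of ``radical'' at play here --- the variety-theoretic one of Definition \ref{def:radical} and the ring-theoretic one of Definition \ref{def:radideal} --- and the Stone-Gelfand-Kolmogorov lemma shows these coincide for $k[X]$ with $k$ algebraically closed.
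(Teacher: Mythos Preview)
Your proposal is correct and follows exactly the approach the paper intends: the paper's own proof is the single sentence ``Immediate consequence of Lemma \ref{l:SGK-for-rings} together with Theorem \ref{thm:algnull}'', and you have simply spelled out in detail how these two results combine, including the use of the Galois connection to show that the two notions of radical coincide.
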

\begin{proof}Immediate consequence of Lemma \ref{l:SGK-for-rings} together with Theorem \ref{thm:algnull}.
\end{proof}
It is now possible to functorialise the above along the lines of the affine adjunctions studied in this paper, thereby obtaining the usual classical algebraic adjunction. We do not spell out  the details.

\section{The setting of syntactic categories}\label{syntacticcategories}

It is important to remark that the abstract categorical framework  developed in Section \ref{s:affadj} can be applied well beyond the standard setting of universal algebra investigated in Section \ref{sec:algebraic}. For the rest of this section, we assume that the reader is familiar with the notions of syntactic category and of classifying topos of a geometric theory. For background, please see  \cite{MR} and the first two chapters of \cite{CaramelloBook}. 

Recall that the syntactic category ${\mathcal{C}}_{\mathbb T}$ of a geometric theory $\mathbb T$ has as objects the geometric formulas-in-context $\{\vec{x}. \phi\}$ over the signature of the theory, and as arrows $\{\vec{x}. \phi\} \to \{\vec{y}. \phi\}$ the $\mathbb T$-provable equivalence classes $[\theta]$ of geometric formulas $\theta(\vec{x}, \vec{y})$ which are $\mathbb T$-provably functional from $\phi(\vec{x})$ to $\psi(\vec{y})$. The notion of $\mathbb T$-provably functional formula naturally generalises the notion of (morphism defined by) a term; indeed, for any term $t(\vec{x})$, the formula $y=t(\vec{x})$ is provably functional from $\{\vec{x}. \top\}$ to $\{\vec{y}. \top\}$. For any geometric theory $\mathbb{T}$, the models of $\mathbb T$ in any Grothendick topos $\mathcal{E}$ can be identified with functors ${\mathcal{C}}_{\mathbb T}\to \mathcal{E}$ preserving the geometric structure on the category ${\mathcal{C}}_{\mathbb T}$. The functor $F_{M}:{\mathcal{C}}_{\mathbb T}\to \mathcal{E}$ corresponding to a $\mathbb T$-model $M$ in $\mathcal{E}$ sends $\{\vec{x}. \phi\}$ to (the domain of) its interpretation $[[\vec{x}. \phi]]_{M}$ in $M$ and any arrow $[\theta]:\{\vec{x}. \phi\} \to \{\vec{y}. \psi\}$ in $\mathcal{C}_{\mathbb T}$ to the arrow $[[\vec{x}. \phi]]_{M}\to [[\vec{y}. \psi]]_{M}$ in $\mathcal{E}$, denoted by $[[\theta]]_{M}$ abusing notation, whose graph is the interpretation of the formula $\theta$ in $M$. 

A natural context for applying the categorical framework of Section \ref{s:affadj}  is obtained, for a given geometric theory $\mathbb T$, by taking:

\begin{itemize}
	\item $\T$ to be a full subcategory of $\mathcal{C}_{\mathbb T}$ closed (in $\mathcal{C}_{\mathbb T}$) under all  powers of $\{x. \top\}$;
	
	\item $\a$ to be the object $\{x. \top\}$;
	
	\item $\S$ to be  a Grothendieck topos (for instance, the category $\Set$ of sets);
	
	\item $\mathscr{I}$ to be the functor $F_{M}\colon\T\to \S$ corresponding to an arbitrarily fixed $\mathbb T$-model $M$ in $\S$ as specified above. 
\end{itemize}
We now check that the above instantiation meets the requirements of Assumptions \ref{ass:limits},  \ref{ass:product-preserving}, and \ref{ass:cosep}.

Clearly, if $\T$ is closed under all  powers of $\{x. \top\}$ existing in $\mathcal{C}_{\mathbb T}$ then $\I$ preserves all the existing powers of $\a$, so that Assumption \ref{ass:product-preserving} is satisfied.

Assumption \ref{ass:limits} is satisfied as any Grothendieck topos $\S$ has small limits. We next verify the remaining requirement that $\a$ is an $\I$-coseparator.

One can consider congruences and definable maps in the general setting of (one-sorted) geometric theories:

\begin{definition}\label{defcongruence}
	Let $\mathbb T$ be a one-sorted geometric theory and $M$ a $\mathbb T$-model.
	\begin{enumerate}[(a)]
		\item A \emph{definable map} $M^{k}\to M$ is a map of the form $[[\theta]]_{M}$ where $\theta$ is a $\mathbb T$-provably functional formula from $\{x. \top\}^{k}$ to $\{x. \top\}$.
		\item  A \emph{congruence} on $M$ is an equivalence relation $R$ on $M$ such that for any definable map $d:M^{k}\to M$, $(x_{i}, y_{i})\in R$ for all $i=1, \ldots, k$ implies that $(d(x_{1}, \ldots, x_{k}), d(y_{1}, \ldots, y_{k}))\in R$.
	\end{enumerate}
\end{definition}

\begin{lemma}
	In the setting defined above, the object $\a$ is always an $\I$-coseparator.
\end{lemma} 

\begin{proof}
	We have to verify that for every object $\{\vec{x}. \phi\}$ of $\T$, the family of arrows $[[\theta]]_{M}:[[\vec{x}. \phi]]_{M}\to M$, where $\theta$ varies among the $\mathbb T$-provably functional formulas from $\{\vec{x}. \phi\}$ to  $\{y. \top\}$, is jointly monic in $\S$. Now, if $\vec{x}=(x_{1}, \ldots, x_{n})$, for any $i\in \{1, \ldots, n\}$ the formula $y=x_{i}\wedge \phi(\vec{x})$ is $\mathbb T$-provably functional from $\{\vec{x}. \phi\}$ to  $\{y. \top\}$. But the interpretations in $M$ of such formulas are nothing but the canonical projections $[[\vec{x}. \phi]]_{M}\subseteq M^{n}\to M$, which are obviously jointly monic.
\end{proof}

\begin{remarks}\label{rem_syntactic}
	\begin{enumerate}[(a)]
		\item If $\mathbb T$ is an algebraic theory, it is natural to take $\T$ equal to the subcategory of ${\mathcal{C}_{\mathbb T}}$ whose objects are the powers of $\{x. \top\}$ (that is, the opposite of the category of free $\mathbb T$-models. One can prove that the $\mathbb T$-functional formulas between formulas are all induced by terms, up to $\mathbb T$-provable equivalence (see e.g.\ \cite[p.\ 120]{blasce}). In particular, the notions   in Definition \ref{defcongruence} agree with the corresponding algebraic ones.
		
		\item If $M$ is a \emph{conservative model} for $\mathbb T$ (i.e. any geometric sequent over the signature of $\mathbb T$ which is valid in $M$ is provable in $\mathbb T$) then the arrows $\{x. \top\}^{k} \to \{x. \top\}$ in $\T$ can be identified with the $\mathbb T$-model definable maps $M^{k}\to M$, for each $k$. Indeed, for any two $\mathbb T$-provably functional formulas $\theta_{1}, \theta_{2}$ from $\{x. \top\}^{k}$ to $\{x. \top\}$, we have $[\theta_{1}]=[\theta_{2}]$ if and only if $\theta_{1}$ and $\theta_{2}$ are $\mathbb T$-provably equivalent; but this is equivalent, $M$ being conservative, to the condition that $[[\theta_{1}]]_{M}=[[\theta_{2}]]_{M}$. For example, if $\mathbb T$ is the algebraic theory of Boolean algebras, the Boolean algebra $\{0,1\}$ is a conservative model for $\mathbb T$ and hence the free Boolean algebra on $k$ generators can be identified with the set of definable maps $\{0,1\}^{k}\to \{0,1\}$.
	\end{enumerate} 
\end{remarks}

A particularly natural class of theories to which the above setting can be applied is that of theories of presheaf type. Recall that a geometric theory is said to be of \emph{presheaf type} if it is classified by a presheaf topos. This class of theories includes all finitary algebraic (or, more generally, cartesian) theories and many other interesting cases, such as the theory of total orders, the theory of algebraic extensions of a base field, the theory of lattice-ordered abelian groups with strong unit \cite{CaramelloRusso1}, the theory of perfect MV-algebras or more generally of local MV-algebras in a proper variety of MV-algebras (see \cite{CaramelloRusso2} and \cite{CaramelloRusso3}), etc.
In fact, theories of presheaf type represent the `logical counterpart' of small categories: \emph{every} small category is, up to idempotent-splitting completion, the category of finitely presentable models of a theory of presheaf type (cf.\ \cite[Section 6.1]{CaramelloBook}). For a comprehensive study of this class of theories, we refer the reader to  \cite[Chapter 6]{CaramelloBook}.

Interestingly, while free objects in the category of set-based models of a theory of presheaf type $\mathbb T$ need not exist,  the category is always generated by the finitely presentable (equivalently, finitely presented) $\mathbb T$-models. The full subcategory spanned by such models is dual to the full subcategory of the syntactic category of the theory $\mathbb T$ on the $\mathbb T$-irreducible formulas (cf.\  \cite[Theorem 4.3]{CaramelloSyntactic}), and for each such formula $\phi(\vec{x})$ presenting a model $M_{\phi}$, we have (if $\mathbb T$ is one-sorted) $M_{\phi}\cong \hom_{{\mathcal C}_{\mathbb T}}(\{\vec{x}. \phi\}, \{x. \top\})$ (cf.  \cite[Theorem 6.1.17]{CaramelloBook}).

\begin{proposition}\label{stability}
	Let $\mathbb T$ be a one-sorted theory of presheaf type, and suppose $\{\vec{x}.\phi\}$ is an object of $\T$ presenting a $\mathbb T$-model $M$. Then congruences on $M$ in the sense of Definition \ref{defcongruence}\textup{(}b\textup{)} are precisely $\a$-congruences on $M$ \textup{(}regarding $M$ as $\hom_{\T}(\{\vec{x}. \phi\}, \a\textup{)}$ in the sense of section \ref{ss:FromRtoCong}.
\end{proposition}
\begin{proof}
	It suffices to notice that, for any arrow $[\theta]:\a^{k} \to \a$ in $\S$, $[[\theta]]_{M}$ is precisely the function $[\theta]\circ -:\hom_{\T}(\{\vec{x}. \phi\}, \a)^{k}\cong \hom_{\T}(\{\vec{x}. \phi\}, \a^{k}) \to \hom_{\T}(\{\vec{x}. \phi\}, \a)$.
\end{proof}
Hence, taking $\T$ to be the full subcategory of the geometric syntactic category $\mathcal{C}_{\mathbb T}$ of a theory of presheaf type $\mathbb T$ on the formulas which are either $\{y. \top\}$ or $\mathbb T$-irreducible and $\S$ to be $\Set$ yields in particular an adjunction between a category of congruences on finitely presentable $\mathbb T$-models and a certain category of definable sets and $\mathbb T$-definable maps between them. We also note that the equivalence between the first two items in the algebraic \emph{Nullstellensatz} (Theorem \ref{thm:algnull}) holds more generally for any theory of presheaf type $\mathbb T$ (replacing $\F(\mu)$ with any finitely presentable $\mathbb T$-model), with essentially the same proof. This shows in particular that one can construct natural analogues of the affine algebraic adjunctions of Section \ref{sec:algebraic} also in non-algebraic settings.

\section{Two classical examples}\label{p:dualities}
In this section we indicate how the algebraic affine adjunction of Section \ref{sec:algebraic}  relates to two classical duality theories, Stone duality for Boolean algebras and Gelfand duality for commutative unital $C^*$-algebras. In both cases we will see that the characterisation of the fixed points on the algebraic side of the adjunction is streamlined by the {\it Nullstellensatz}. By contrast,  at this stage we do not have an informative characterisation of the fixed points of the adjunction on the geometric side. Nonetheless, some preliminary remarks are possible, and we begin with these.
\subsection{Remarks on the dual \textsl{Nullstellensatz}}\label{topNullstellensatz}Considering again  the setting of  an arbitrary variety, we continue the notation
of Section \ref{sec:algebraic}. Let us henceforth  assume that the operator $\VC$ is topological, i.e.\ it commutes with finite unions. This assumption does hold for a number of classical dualities, including in particular  Stone and Gelfand dualities.  

The set $A$ is naturally equipped with the topology whose closed sets are those of the form $\VC{(S)}$, for $S\seq A$. We call this the \emph{Zariski} topology.  There are then at least two choices for a topology on the power $A^{\mu}$.
\begin{enumerate}
\item The product topology generated by the Zariski topology on $A$.
\item The \emph{Zariski} topology on $A^{\mu}$, i.e., the one in which the closed sets are those of the form $\VC{(T)}$, for $T\seq A^{\mu}$.
\end{enumerate}	
The product topology is easier to work with, while the Zariski topology yields the (tautological) characterisation of the subsets fixed by the Galois connection as precisely the closed subsets.  It is reasonable to ask for a  criterion that ensures the agreement of the two topologies.  We begin with an observation.
\begin{lemma}\label{l:continuous}
Definable functions $A^\mu\to A$ are continuous with respect to the Zariski topology.
\end{lemma}
\begin{proof}
Let $f$ be a definable function from $A^{\mu}$ into $A$, with defining term $\lambda(X_{\mu})$, with $X_{\mu}=(X_{\alpha})_{\alpha<\mu}$.  We prove that the inverse image of every closed set in $A$ is closed in $A^{\mu}$.  Arbitrary closed sets in $A$ are of the form $\VC{(S)}$ for  $S\seq A$.  Consider the set 
\[\theta\coloneqq\{\left( s(\lambda(X_{\mu})), t(\lambda(X_{\mu}))\right )\mid (s,t)\in \CC(S)\};\] 
we claim that $f^{-1}[\VC{(S)}]=\VV(\theta)$.  Indeed $d_{\mu}\in f^{-1}[C]$ if, and only if, there is $c\in \VC(S)$ such that $f(d_{\mu})=c$. By definition $c\in \VC(S)$ if, and only if, for all $(s,t)\in \CC(S)$, $s(c)=t(c)$.  Now, since $\lambda$ is a defining term for $f$, we have  $c=f(d_{\mu})=\lambda(d_{\mu})$.  So for all  $(s,t)\in \CC(S)$, $s(\lambda(d_{\mu}))=t(\lambda(d_{\mu}))$, and this is equivalent to saying that $d_{\mu}\in \VV(\theta)$.
\end{proof}
This leads to the following.
\begin{lemma}\label{l:co-null}
Assume that the Zariski topology on $A$ is Hausdorff and that each definable function $A^\mu\to A$ is continuous with respect to the product topology on the domain and the Zariski topology on the codomain. Then the product and the Zariski topologies  on $A^\mu$ coincide.
\end{lemma}
\begin{proof} 
By definition  the product topology is the coarsest topology that makes  all projections continuous \cite[pag.\ 89]{kelley1955general}, and  projections are clearly definable; hence  Lemma \ref{l:continuous} guarantees that the product topology is always coarser than the Zariski topology.  For the converse inclusion,  if $X$ is any space, and $Y$ is Hausdorff, then for any two continuous functions $f,g\colon X\to Y$ the solution set of the equation $f=g$ is closed in $X$ \cite[1.5.4]{engelking}. By assumption $A$ is Hausdorff in the Zariski topology, and  definable functions are continuous with respect to the product topology on $A^\mu$ and the Zariski topology on $A$, so for any pair of terms $(s,t)$ the set $\VV{(s,t)}$ is closed in the product topology.  On the other hand, $\VV(R)=\VV{\big(\bigcup_{(s,t)\in R}\{(s,t)\}\big)}=\bigcap_{(s,t)\in R}\VV{(s,t)}$ holds by Lemma \ref{lem:galois}. We conclude that $\VV(R)$ is closed in the product topology of $A^\mu$ for any subset $R$ of $\F_{\mu}\times\F_{\mu}$. 
\end{proof}
\begin{remark}\label{c:discrete-topology}
Under the further assumption that the variety $\Va$ is finitary, Lemma \ref{l:co-null} affords a connection with the theory of natural dualities \cite{clark1998natural}.  In that setting it is common to work with a finite dualising object equipped with the discrete topology, and  to use the product  topology on powers to  characterise the dual spaces. In our algebraic framework, too, the discreteness assumption on $A$   simplifies matters.  Indeed, if the Zariski topology on $A$ is discrete, so that all  finite products are also discrete, then since the variety is finitary the definable functions $A^\mu\to A$ are continuous with respect to the product topology on $A^{\mu}$, for any cardinal $\mu$.  Thus the assumptions of Lemma \ref{l:co-null} are met, ensuring that the Zariski topology and the product topology coincide.
\end{remark}

\subsection{Stone duality for Boolean algebras}\label{s:stone}
We indicate how to derive Stone duality for the variety  $\Va$ of Boolean algebras from the general adjunction. 
Take $A$ to be the two-element  Boolean algebra $\{0,1\}$. The only  piece of information  we need about $\Va$ to characterise the algebras fixed by the adjunction is the following.
\begin{lemma}[\mbox{\cite[Lemma 1]{birkhoff1944subdirect}}]\label{l:sub-boole}
To within an isomorphism, the only subdirectly irreducible Boolean algebra is  $\{0,1\}$.
\end{lemma}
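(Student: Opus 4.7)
The plan is to use the standard characterisation of subdirect irreducibility in terms of congruences: an algebra $B$ is subdirectly irreducible if, and only if, the intersection of all non-identity congruences on $B$ is itself a non-identity congruence (equivalently, the congruence lattice of $B$ has a unique atom). For Boolean algebras, congruences are in order-preserving bijection with ideals via $I \mapsto \{(x,y) \mid x \triangle y \in I\}$, where $\triangle$ denotes symmetric difference; so $B$ is subdirectly irreducible if, and only if, it admits a smallest non-zero ideal.

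First, I would verify that the two-element Boolean algebra $\{0,1\}$ is subdirectly irreducible: its only ideals are $\{0\}$ and $\{0,1\}$, so the congruence lattice is the two-element chain, which has a unique atom trivially.

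For the converse direction, I would show that any Boolean algebra $B$ with at least three elements fails to be subdirectly irreducible. Given such $B$, one can pick $b \in B$ with $0 < b < 1$; writing $\neg b$ for the Boolean complement of $b$, both the principal ideals $(b)$ and $(\neg b)$ are non-zero, yet their intersection is the principal ideal generated by $b \wedge \neg b = 0$, namely the trivial ideal $\{0\}$. Hence $B$ has two distinct non-zero ideals with trivial intersection, so no smallest non-zero ideal can exist, and $B$ is not subdirectly irreducible. Equivalently, the canonical map $B \to B/(b) \times B/(\neg b)$ is a faithful subdirect embedding in which neither projection is an isomorphism.

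The only (minor) obstacle is to ensure the reader accepts the ideal-congruence correspondence and the congruence-lattice characterisation of subdirect irreducibility; both are textbook facts available, e.g., from \cite{Burris:81}, and no genuine difficulty is involved beyond choosing the non-trivial element $b$ and observing that $b$ and $\neg b$ generate two non-zero ideals whose meet is trivial.
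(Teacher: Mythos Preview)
Your argument is correct. The paper does not actually supply a proof of this lemma: it simply cites \cite[Lemma 1]{birkhoff1944subdirect} and moves on, so there is nothing to compare against except Birkhoff's original argument, which is essentially the one you give (pick $0<b<1$, observe that the congruences corresponding to $b$ and $\neg b$ are non-trivial with trivial intersection).
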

By Corollary \ref{cor:algadj} we have a dual adjunction between $\Vap$ and $\D$ given by the functors $\C$ and $\V$.  We are interested in characterising the fixed points of this adjunction.  
\begin{lemma}\label{c:fix-alg-boole}
With $\Va$ and $A$ as above and  with reference to the functors of Corollary \ref{cor:algadj}, one has that all algebras in $\Vap$ are fixed by the composition $\C\circ\V$.
\end{lemma}
\begin{proof}
By Lemma \ref{l:sub-boole} all subdirectly irreducible Boolean algebras embed into $A$, hence by Lemma \ref{l:SGK} (item \ref{l:SGK:item2}), all congruences presenting a subdirectly irreducible Boolean algebra are of the form $\CC{(a)}$ for a suitable $a$.  Since $\Va$ is a finitary variety, the  congruences that present a subdirectly irreducible algebra are exactly the completely meet irreducible ones in the lattice of congruences of $\F{(\mu)}$ (see e.g.\  \cite[Lemma 4.43]{MMT87}).  Since  lattices of congruences are algebraic, all congruences are intersections of completely meet irreducible elements (see e.g.\  \cite[Theorem 2.19]{MMT87}), hence by the algebraic Nullstellensatz (Theorem \ref{thm:algnull}) we obtain that all algebras in $\Vap$ are fixed by the composition $\C\circ\V$.
\end{proof}

We now turn to the side of affine subsets.  The category $\D$ is given by  subsets of $\{0,1\}^{\mu}$, for $\mu$ ranging among all cardinals, and definable maps between them.  
\begin{lemma}\label{l:fix-top-boole}The closure operator $\CC\circ\VV$ is topological. The Zariski topology on $\{0,1\}$ is the discrete topology.
For any cardinal $\mu$,  a set $S\seq\{0,1\}^{\mu}$ is closed in the product topology if, and only if,  $\VV{(\CC{(S)})}= S$.
\end{lemma}
\begin{proof} The first statement is an easy direct verification, using for instance the $\wedge$ operation of Boolean algebras.
The Zariski topology on $A=\{0,1\}$ is discrete as $\{0\}=\VV{(0,x)}$ and $\{1\}=\VV{(1,x)}$. The second statement follows from  Lemma \ref{l:co-null} and Remark \ref{c:discrete-topology}.
\end{proof}
Combining  Lemma \ref{c:fix-alg-boole} with Lemma \ref{l:fix-top-boole},   we obtain: 
\begin{corollary}\label{c:presentedBoole-closedsubsets}
Let $\Va$ be the variety of Boolean algebras and their homomorphisms, and let $A$ be the Boolean algebra $\{0,1\}$. The functors of Corollary \ref{cor:algadj} form a contravariant equivalence between $\Va$ and the category of closed subspaces of the generalised Cantor cubes $\{0,1\}^{\mu}$ with definable maps between them.
\end{corollary}

To recover Stone duality in its classical form, further specific work is needed on the spatial side. Specifically, one needs (a)\ an intrinsic characterisation of the closed subspaces of $\{0,1\}^{\kappa}$,  and (b)\ an intrinsic characterisation of the definable maps between closed subspaces of generalised Cantor cubes.  The answers in both cases are well known:  the closed subspaces of  $\{0,1\}^{\kappa}$, for $\kappa$ ranging among cardinals,  are exactly the compact, Hausdorff, zero-dimensional spaces; and the definable maps between such subspaces are precisely the continuous maps.

\subsection{Gelfand duality for  $C^{*}$-algebras}\label{s:stone-gelfand}
A \emph{\textup{(}complex, commutative, unital\textup{)} $C^*$-algebra} is a complex commutative Banach algebra $A$ (always unital, with identity element written $1$) equipped with an involution ${\cdot}^*\colon A\rightarrow A$ satisfying $\|x^*x\|=\|x\|^2$ for each $x\in A$. Henceforth, `$C^*$-algebra' means `complex commutative unital $C^*$-algebra'. The category $\Cst$ has as objects 
 $C^*$-algebras, and as morphisms their $^{*}$-homomorphisms, i.e.\ the complex-algebra homomorphisms preserving the 
 involution and $1$. The  \emph{Gelfand duality theorem} asserts that the category $\Cst$ is dually equivalent to the category  of compact Hausdorff spaces and   continuous maps. We indicate how Gelfand duality fits in the framework of affine adjunctions developed above. The first important fact is that we can work at the level of the algebraic adjunction. For this, recall that $x\in A$ is \emph{self-adjoint} if it is fixed by $*$, i.e.\ if $x^*=x$. Further,  recall that self-adjoint elements carry a partial order which may be defined in several equivalent ways; see e.g.\ \cite[Section 8.3]{conway}. For our purposes here it suffices to define  a self-adjoint element $x\in A$ to be \emph{non-negative}, written $x\geq 0$, if there exists a self-adjoint $y\in A$ such that $x=y^2$. There is a functor  $U\colon \Cst\to\Set$  that takes a $C^*$-algebra $A$ to the collection of its non-negative self-adjoint elements whose norm does not exceed unity:
\[
U(A):=\{x\in A\mid x^*=x, 0\leq x, \|x\|\leq 1 \}.
\] 
In particular, writing $\Cx$ for the $C^*$-algebra on the complex numbers, $U(\Cx)=[0,1]$, the real unit interval. 
It is elementary that the restriction of a $^{*}$-homomorphism $A\to B$ to $U(A)$ induces a function $U(A)\to U(B)$,  so that $U$ is indeed a functor.

The following theorem puts together a number of known results; details and relevant references can be found in \cite{Marra-Reggio}.
\begin{theorem}\label{thm:negrepontis}The  category $\Cst$ is a variety with respect to the functor $U\colon \Cst\to\Set$.   This variety is not finitary, but can be presented using operations of countable arity. The left adjoint $F$ to the functor $U$  sends a set $S$  to the $C^*$-algebra of all complex valued, continuous functions on the compact Hausdorff space $[0,1]^{S}$.\end{theorem}
We set $\Va:=\Cst$, and $A:=\Cx$. Corollary \ref{cor:algadj} yields a dual adjunction 
 between $\Cst$ and the category of subsets of $[0,1]^{\mu}$ ---with $\mu$ ranging over all cardinals--- and definable maps.   
The characterisation of the fixed points of the adjunction on the algebraic side is now  similar to the one in Stone duality; the only specific piece of information we need on the class of $C^{*}$-algebras is the following.
\begin{lemma}\label{l:c*semisimple}
\mbox{}
\begin{enumerate}
\item\label{l:c*semisimple-item2}   The only simple $C^*$-algebra is $\Cx$.
\item\label{l:c*semisimple-item1} The variety $\Va^{*}$  semisimple.
\end{enumerate}
\end{lemma}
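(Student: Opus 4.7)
The plan is to pull both statements back through the equivalence $\Cst \simeq \Va^{*}$ of Theorem \ref{thm:negrepontis} and settle them on the $C^{*}$-algebraic side, where they reduce to the Gelfand--Naimark representation together with the classical dictionary between closed $^{*}$-ideals of $\Cont(X,\Cx)$ and closed subsets of $X$. Crucially, the equivalence preserves and reflects kernel pairs, so congruences of an object of $\Va^{*}$ correspond bijectively and naturally to closed $^{*}$-ideals of the associated $C^{*}$-algebra, and quotients on one side match quotients on the other.

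For (\ref{l:c*semisimple-item2}), I would assume that $A \in \Va^{*}$ is simple and write $A \cong \Cont(X,\Cx)$ with $X$ compact Hausdorff. The map $I \longmapsto \{x\in X\mid f(x)=0 \text{ for all } f\in I\}$ is an order-reversing bijection between closed $^{*}$-ideals of $\Cont(X,\Cx)$ and closed subsets of $X$; simplicity of $A$ therefore forces $X$ to have only the trivial closed subsets, i.e.\ to be a singleton, whence $A \cong \Cx$. Conversely, $\Cx$ has no non-trivial closed $^{*}$-ideals (its Gelfand spectrum being a point), so $\Cx$ is indeed simple in $\Va^{*}$.

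For (\ref{l:c*semisimple-item1}), given any $A \cong \Cont(X,\Cx)$, I would use the point-evaluation $^{*}$-homomorphisms $\ev_{x}\colon A \to \Cx$, $f \mapsto f(x)$: these are surjective, and $\bigcap_{x\in X}\ker \ev_{x} = \{0\}$ since a continuous function vanishing at every point is zero. Assembling them yields an injective $^{*}$-homomorphism $A \hookrightarrow \Cx^{X}$ whose composition with each projection is surjective; transported along the equivalence, this provides a subdirect embedding of $A$ into a product of copies of the simple algebra $\Cx$, which shows that every algebra in $\Va^{*}$ is semisimple.

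The subtlest point is the justification that the subdirect embeddings one computes in $\Cst$ genuinely qualify as subdirect embeddings in the universal-algebraic sense in $\Va^{*}$; this reduces to the fact that the underlying-set functor $U$ of Theorem \ref{thm:negrepontis} preserves and reflects monomorphisms and products, which holds because it is (by construction) the underlying set functor of the variety $\Va^{*}$ post-composed with the equivalence. I do not expect any genuinely hard step: once the translation of congruences is in place, both claims are immediate from Gelfand--Naimark.
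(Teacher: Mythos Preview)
Your proposal is correct and follows essentially the same route as the paper, which is considerably terser: it simply cites as ``standard'' that a quotient of a commutative unital $C^{*}$-algebra by an ideal $I$ is isomorphic to $\Cx$ if and only if $I$ is maximal, and that every such algebra has enough maximal ideals to be a subdirect product of copies of $\Cx$. Your argument unpacks exactly these facts through the Gelfand representation $A\cong\Cont(X,\Cx)$ and the ideal--closed-set correspondence, and your care about transferring subdirect embeddings along the equivalence of Theorem~\ref{thm:negrepontis} addresses a point the paper leaves implicit.
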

\begin{proof}
The first item amounts to the standard fact that a quotient of a $C^*$-algebra modulo an ideal $I$ is isomorphic to $\Cx$ if, and only if, $I$ is maximal. The second item amounts to the equally well-known fact that each $C^*$-algebra has enough maximal ideals to separate elements, and thus is a subdirect product of copies of $\Cx$.
\end{proof}

\begin{corollary}
Every  $C^{*}$-algebra is fixed by the composition $\C\circ \V$.
\end{corollary}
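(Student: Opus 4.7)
The proof is essentially an invocation of Corollary \ref{c:semisimple-fixed}(\ref{c:semisimple-fixed-item1}), so the plan is to verify its hypotheses for the present choice of $\Va$ and $A$. First, I would recall from Lemma \ref{l:c*semisimple}(\ref{l:c*semisimple-item1}) that $\Va^{*}$ is a semisimple variety. Second, I would note that Lemma \ref{l:c*semisimple}(\ref{l:c*semisimple-item2}) says that up to isomorphism, $\Cx$ is the \emph{unique} simple object in $\Va^{*}$; in particular, the class of pairwise non-isomorphic simple algebras has cardinality $1$, so it is certainly bounded above by any cardinal $\kappa \geq 2$, and the coproduct of all such simple algebras is just $\Cx$ itself.

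Next I would need to match the parameter $A$ used in the present section with the hypothesis of Corollary \ref{c:semisimple-fixed}(\ref{c:semisimple-fixed-item1}), which requires $A$ to be the coproduct of the simple algebras. Under the equivalence of Theorem \ref{thm:negrepontis}, the $C^{*}$-algebra $\Cx$ corresponds to the $\Va^{*}$-algebra whose underlying set is $U(\Cx)=[0,1]$. Hence taking $A=[0,1]$, as declared at the beginning of this section, is the same as taking $A$ to be (the $\Va^{*}$-incarnation of) $\Cx$, which in turn is the required coproduct of the simple algebras of $\Va^{*}$.

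With the hypotheses of Corollary \ref{c:semisimple-fixed}(\ref{c:semisimple-fixed-item1}) thus verified, I would immediately conclude that the composition $\C^{q}\circ \V^{q}$ fixes every presented object $(\F(\mu),\theta)$ in $\Va^{*}_{p}$, and hence every commutative $C^{*}$-algebra up to isomorphism via Remark \ref{rem:Vpequiv} and Theorem \ref{thm:negrepontis}.

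I do not foresee a genuine obstacle: the entire argument reduces to the two facts packaged in Lemma \ref{l:c*semisimple} together with the identification of $A$ with $\Cx$. The only mildly delicate point is bookkeeping, namely ensuring that the functor $U\colon \Cst \to \Set$ of Theorem \ref{thm:negrepontis} is indeed the underlying-set functor of the variety $\Va^{*}$, so that the algebra $A=[0,1]$ used to build the adjunction $\C^{q}\dashv \V^{q}$ coincides with $\Cx$ viewed as a $\Va^{*}$-algebra; this is exactly the content of Theorem \ref{thm:negrepontis}.
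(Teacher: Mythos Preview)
Your proposal is correct and follows essentially the same approach as the paper, which simply writes ``By combining Proposition \ref{c:semisimple-fixed} and Lemma \ref{l:c*semisimple}.'' You have merely spelled out the verification of the hypotheses in more detail, including the identification of $A=[0,1]$ with the unique simple algebra $\Cx$ via Theorem \ref{thm:negrepontis}.
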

\begin{proof}
 Let  $F(\mu)/\theta$ be any presented algebra in $\Cst$. By Lemma \ref{l:c*semisimple} (item \ref{l:c*semisimple-item1}), $F(\mu)/\theta$ is semisimple, so by definition there is a subdirect embedding of $F(\mu)/\theta$ into a product of simple algebras. By Lemma \ref{l:c*semisimple} (item \ref{l:c*semisimple-item2}), each simple algebras in the decomposition is   isomorphic to $\Cx$.  So, by Lemma \ref{l:SGK}, each simple algebra is isomorphic to $F(\mu)/\CC(\{a\})$ for suitable $a$'s.   Since the decomposition is subdirect $\theta\subseteq \CC(\{a\})$, so by the Galois connection (\ref{eq:galois}) we have  $a\in \VV(\theta)$.  Thus we have a subdirect embedding $F(\mu)/\theta$ into $\prod_{a\in\VV(\{a\})}F(\mu)/\CC(\{a\})$, and this embedding is verified to coincide with the Birkhoff transform by the uniqueness property of the latter.  Thus, Theorem \ref{thm:algnull} applies to yield that  the congruence $\theta$ is fixed by $\CC\circ \VV$, from which the thesis follows at once.\end{proof}
We thus have:
\begin{corollary}
The category $\Cst$ is dually equivalent to the category of subsets of $[0,1]^{\mu}$ that are fixed by $\VV\circ\CC$, as $\mu$ ranges over all cardinals, and definable maps between them.
\end{corollary}
Passage from this affine adjunction to  classical Gelfand duality is conceptually analogous to what we have seen for Boolean algebras, but calls for deeper work. The characterisation of the fixed points on the spatial  side requires first to show that the operator $\VC$ is topological, then that the Zariski topology on $U(\Cx)=[0,1]$ is Hausdorff, and finally that each definable function $[0,1]^\mu\to [0,1]$ is continuous with respect to the product topology on $[0,1]^\mu$ and the Zariski topology on $[0,1]$.  Lemma \ref{l:co-null} then guarantees that the subsets of $[0,1]^\mu$ that are fixed by the adjunction are precisely the closed sets. Turning to morphisms, the definable maps must be proved to be precisely the continuous maps between such closed sets. We note that this fact essentially amounts to the Stone-Weierstrass theorem. Finally, the classical version of Gelfand duality is obtained through an intrinsic  characterisation of the closed  sets of $[0,1]^\mu$.  This is  the well-known theorem \cite[Lemma 4.5, p.\ 116]{kelley1955general}: A topological space is compact and Hausdorff if, and only if, it is homeomorphic to a closed subset of the Tychonoff cube $[0,1]^{\mu}$, for some $\mu$.
\subsection*{Acknowledgements.}
\noindent The first author thankfully acknowledges partial support from a Research Fellowship at Jesus College, Cambridge, a CARMIN Fellowship at IH\'ES-IHP, and a Marie Curie INdAM-COFUND-2012 Fellowship.
The second author gratefully acknowledges partial support by the Italian FIRB ``Futuro in Ricerca'' grant RBFR10DGUA.
The second and third  authors further  acknowledge  partial support from the Italian National Research Project (PRIN2010--11) entitled \emph{Metodi logici per il trattamento dell'informazione}.  Parts of this article where written while the second and third  authors were kindly hosted by the CONICET in Argentina within the European FP7-IRSES project \emph{MaToMUVI} (GA-2009-247584).  The third author gratefully acknowledges  partial support by the Marie Curie Intra-European Fellowship for the project ``ADAMS" (PIEF-GA-2011-299071). Finally, the second author wishes to express his gratitude to Dirk Hofmann for a useful conversation on the subject of this paper, and  for his most valuable bibliographic and mathematical suggestions related to the general theory of  concrete dual adjunctions.


\begin{thebibliography}{10}
	
	\bibitem{adameketal94}
	J.~Ad{\'a}mek and J.~Rosick{\'y}.
	\newblock {\em Locally presentable and accessible categories}, volume 189 of
	{\em London Mathematical Society Lecture Note Series}.
	\newblock Cambridge University Press, Cambridge, 1994.
	
	\bibitem{adameketal11}
	J.~Ad{\'a}mek, J.~Rosick{\'y}, and E.~M. Vitale.
	\newblock {\em Algebraic theories: a categorical introduction to general algebra}, volume 184 of {\em Cambridge Tracts in
		Mathematics}.
	\newblock Cambridge University Press, Cambridge, 2011.
	
	\bibitem{atyiahmacdonald}
	M.~F. Atiyah and I.~G. Macdonald.
	\newblock {\em Introduction to commutative algebra}.
	\newblock Addison-Wesley Publishing Co., Reading, Mass.-London-Don Mills, Ont.,
	1969.
	
	
	
	
	\bibitem{birkhoff1944subdirect}
	G.~Birkhoff.
	\newblock Subdirect unions in universal algebra.
	\newblock {\em Bulletin (New Series) of the American Mathematical Society},
	50(10):764--768, 1944.
	
	\bibitem{birkhoff79}
	G.~Birkhoff.
	\newblock {\em Lattice theory}, volume~25 of {\em American Mathematical Society
		Colloquium Publications}.
	\newblock American Mathematical Society, Providence, R.I., third edition, 1979.
	
	\bibitem{blasce}
	A.~Blass and A.~\v{S}\v{c}edrov.
	\newblock Classifying topoi and finite forcing.
	\newblock {\em Journal of Pure and Applied Algebra}, 28(2):111--140, 1983.	
	
	\bibitem{Burris:81}
	S.~Burris and H.~P. Sankappanavar.
	\newblock {\em A course in universal algebra}.
	\newblock Graduate texts in Mathematics. Springer-Verlag, 1981.
	
	\bibitem{CaSp17}
	L.~Cabrer and L.~Spada. 
	\newblock MV-algebras, infinite dimensional polyhedra, and natural dualities. 
	\newblock {\em Archive for Mathematical Logic}, 56(1), 21--42. 2017 

	\bibitem{CaramelloSyntactic}
	O.~Caramello.
	\newblock Syntactic characterizations of properties of classifying toposes.
	\newblock {\em Theory and Applications of Categories}, 26(6):176--193, 2012.
		
	\bibitem{CaramelloBook}
	O.~Caramello.
	\newblock {\em Theories, Sites, Toposes: Relating and studying mathematical theories through topos-theoretic `bridges'}.
	\newblock Oxford University Press, 2017.
	
	
	\bibitem{CaramelloRusso1} 
	O.~Caramello and A.~C. Russo.
	\newblock The Morita-equivalence between MV-algebras and abelian $\ell$-groups with strong unit.
	\newblock {\em Journal of Algebra}, 422: 752--787, 2015.

	\bibitem{CaramelloRusso2} 
	O.~Caramello and A.~C. Russo.
	\newblock Lattice-ordered abelian groups and perfect MV-algebras: a topos-theoretic perspective.
	\newblock {\em Bulletin of Symbolic Logic}, 22(2): 170--214, 2016.
	
	\bibitem{CaramelloRusso3} 
	O.~Caramello and A.~C. Russo.
	\newblock On the geometric theory of local MV-algebras.
	\newblock {\em Journal of Algebra}, 479: 263--313, 2017.
	
	\bibitem{clark1998natural}
	D.~M. Clark and B.~A. Davey.
	\newblock {\em Natural dualities for the working algebraist}, volume~57.
	\newblock Cambridge University Press, 1998.
	
	\bibitem{cohn81}
	P.~M. Cohn.
	\newblock {\em Universal algebra}, volume~6 of {\em Mathematics and its
		Applications}.
	\newblock D. Reidel Publishing Co., Dordrecht-Boston, Mass., second edition,
	1981.
	
	\bibitem{conway}
	J.~B. Conway.
	\newblock {\em A course in functional analysis}, volume~96 of {\em Graduate
		Texts in Mathematics}.
	\newblock Springer-Verlag, New York, second edition, 1990.
	
	\bibitem{daniyarova2012algebraic}
	E.~Y. Daniyarova, A.~G. Myasnikov, and V.~N. Remeslennikov.
	\newblock Algebraic geometry over algebraic structures {II: Foundations}.
	\newblock {\em Journal of Mathematical Sciences}, 185(3):389--416, 2012.
	
	\bibitem{Diers}
	Y.~Diers.
	\newblock Affine algebraic sets relative to an algebraic theory.
	\newblock {\em Journal of Geometry}, 65(1-2):54--76, 1999.
	
	\bibitem{DT89}
	G.~Dimov and W.~Tholen
	\newblock A characterization of representable dualities.
	\newblock In {\em Categorical topology and its relation to analysis, algebra and
		combinatorics ({P}rague, 1988)},
	pages 336--357. World Sci. Publ., Teaneck, NJ, 1989.
	
	\bibitem{engelking}
	R.~Engelking.
	\newblock {\em General topology}, volume~6 of {\em Sigma Series in Pure Mathematics}.
	\newblock Heldermann Verlag, Berlin, second edition, 1989.
	\newblock Translated from the Polish by the author.
	
	\bibitem{gabrielulmer71}
	P.~Gabriel and F.~Ulmer.
	\newblock {\em Lokal pr\"asentierbare {K}ategorien}.
	\newblock Lecture Notes in Mathematics, Vol. 221. Springer-Verlag, Berlin-New
	York, 1971.
	
		
	\bibitem{hartshorne1977algebraic}
	R.~Hartshorne.
	\newblock {\em Algebraic geometry}.
	\newblock Springer, 1977.
	
	\bibitem{jacobson80}
	N.~Jacobson.
	\newblock {\em Basic algebra. {II}}.
	\newblock W. H. Freeman and Co., San Francisco, Calif., 1980.
	
	\bibitem{johnstone}
	P.~T. Johnstone.
	\newblock {\em Stone spaces}, volume~3 of {\em Cambridge Studies in Advanced Mathematics}.
	\newblock Cambridge University Press, Cambridge, 1982.
	
	\bibitem{kelley1955general}
	J.~L. Kelley.
	\newblock {\em General topology. 1955}, volume~27 of {\em Graduate Texts in
		Mathematics}.
	\newblock Springer-Verlag, 1955.
	
	\bibitem{lawvere63}
	F.~W. Lawvere.
	\newblock Functorial semantics of algebraic theories.
	\newblock {\em Proc. Nat. Acad. Sci. U.S.A.}, 50:869--872, 1963.
	
	\bibitem{lawverereprint}
	F.~W. Lawvere.
	\newblock Functorial semantics of algebraic theories and some algebraic
	problems in the context of functorial semantics of algebraic theories.
	\newblock {\em Repr. Theory Appl. Categ.}, 5:1--121, 2004.
	\newblock Reprinted from {\em Proc. Nat. Acad. Sci.} U.S.A. 50 (1963),
	869--872  and {\em Reports of the Midwest Category Seminar. II},
	41--61, Springer, Berlin, 1968 .
	
	\bibitem{Linton:1965}
	F.~E.~J. Linton.
	\newblock Some aspects of equational categories.
	\newblock In {\em Proc. {C}onf. {C}ategorical {A}lgebra ({L}a {J}olla,
		{C}alif., 1965)}, pages 84--94. Springer, New York, 1966.
	
	\bibitem{manes76}
	E.~G. Manes.
	\newblock {\em Algebraic theories}.
	\newblock Springer-Verlag, New York-Heidelberg, 1976.
	\newblock Graduate Texts in Mathematics, No. 26.
	
	\bibitem{MR}
	M.~Makkai and G.~Reyes. 
	\newblock {\em First-order categorical logic}
	\newblock Springer-Verlag, 1977
	\newblock Lecture Notes in Mathematics, vol. 611.
	
	\bibitem{Marra-Reggio}
	V.~Marra and L.~Reggio.
	\newblock Stone duality above dimension zero: axiomatising the algebraic theory of
	{{C}$(X)$}.
	\newblock {\em Adv. Math.}, 307:253--287, 2017.
	
	\bibitem{MarSpa12}
	V.~Marra and L.~Spada.
	\newblock {The dual adjunction between MV-algebras and Tychonoff spaces}.
	\newblock {\em Studia Logica \textup{(}Special issue dedicated to the memory of Leo
		Esakia\textup{)}}, 100(1-2):253--278, 2012.
	
	\bibitem{MarSpa13}
	V.~Marra and L.~Spada.
	\newblock Duality, projectivity, and unification in {{\L}}ukasiewicz logic and
	{MV}-algebras.
	\newblock {\em Ann. Pure Appl. Logic}, 164(3):192--210, 2013.
	
	\bibitem{MMT87}
	R.~McKenzie, G.~McNulty., and W.~Taylor.
	\newblock {\em Algebras, Lattices, Varieties}.
	\newblock Wadsworth and Brooks/Cole, Monterey CA, 1987.
	
	\bibitem{noether21}
	E.~Noether.
	\newblock Idealtheorie in {R}ingbereichen.
	\newblock {\em Math. Ann.}, 83(1-2):24--66, 1921.
	
	\bibitem{pareigis70}
	B.~Pareigis.
	\newblock {\em Categories and functors}.
	\newblock Translated from the German. Pure and Applied Mathematics, Vol. 39.
	Academic Press, New York-London, 1970.
	
	
	\bibitem{Plotkin}
	B.~Plotkin.
    \newblock Varieties of algebras and algebraic varieties.
   \newblock {\em Israel J. Math.}, 96(part B):511--522, 1996.
      
	
		
	\bibitem{PT91}
	H.-E. Porst and W.~Tholen.
	\newblock Concrete dualities.
	\newblock In {\em Category theory at work ({B}remen, 1990)}, volume~18 of {\em
		Res. Exp. Math.}, pages 111--136. Heldermann, Berlin, 1991.
	
	
	
	\bibitem{Slominsky:1959}
	J.~S{\l}omi{\'n}ski.
	\newblock The theory of abstract algebras with infinitary operations.
	\newblock {\em Rozprawy Mat.}, 18:67 pp. (1959), 1959.
	
	\bibitem{tholen2013nullstellen}
	W.~Tholen.
	\newblock Nullstellensatz and subdirect representation.
	\newblock {\em Applied Categorical Structures}, pages 1--23, 2013.
	
		
\end{thebibliography}
\end{document}